\documentclass[11pt]{article}

\usepackage[margin=1.05in]{geometry}
\usepackage{amsmath,amssymb,amsthm,mathtools}
\usepackage{microtype}
\usepackage{enumitem}
\usepackage[hidelinks]{hyperref}
\usepackage{mathrsfs}

\newtheorem{theorem}{Theorem}[section]
\newtheorem{proposition}[theorem]{Proposition}
\newtheorem{lemma}[theorem]{Lemma}
\newtheorem{corollary}[theorem]{Corollary}

\newcommand{\R}{\mathbb R}
\newcommand{\C}{\mathbb C}
\newcommand{\Z}{\mathbb Z}
\newcommand{\1}{\mathbf 1}
\newcommand{\ip}[2]{\left\langle #1,#2\right\rangle}
\newcommand{\diag}{\operatorname{diag}}
\newcommand{\Dom}{\operatorname{Dom}}

\newcommand{\adj}[1]{\operatorname{adj}(#1)}

\title{Fuglede's conjecture holds for three intervals}
\author{Ruxi Shi}
\date{}

\begin{document}
\maketitle

\begin{abstract}
We prove that every bounded measurable subset of the real line that is both
spectral and a union of three intervals tiles the line by translations,
thereby completing Fuglede's conjecture for this class.
The proof develops a new cofactor-rigidity method based on the secular
determinant of an associated self-adjoint derivative.
\end{abstract}

\noindent\textbf{Keywords.}
Fuglede's conjecture, spectral sets, translational tilings, finite unions of
intervals, exponential polynomials, self-adjoint extensions.

\medskip
\noindent\textbf{2020 Mathematics Subject Classification.}
Primary 42A65, 52C22; Secondary 47B25, 47E05.

\section{Introduction}\label{sec:introduction}

A bounded measurable set $\Omega\subset\R$ of positive measure is called
\emph{spectral} if there is a set $\Lambda\subset\R$ such that, on putting
\[
   e_\lambda(x)=|\Omega|^{-1/2}e^{2\pi i\lambda x},
   \qquad
   E(\Lambda)=\{e_\lambda:\lambda\in\Lambda\},
\]
the family $E(\Lambda)$ is an orthonormal basis of $L^2(\Omega)$; then
$\Lambda$ is called a
\emph{spectrum} of $\Omega$.  The set $\Omega$ \emph{tiles by translations} if
there is a countable set $T\subset\R$ such that
\[
   \sum_{t\in T}\1_\Omega(x-t)=1
   \qquad\text{for almost every }x\in\R.
\]

Fuglede's spectral set conjecture asks whether a measurable set of finite
positive measure in $\R^d$ is spectral exactly when it tiles $\R^d$ by
translations.  Fuglede proved the equivalence when the spectrum or the
translation set is a full-rank lattice \cite{fuglede}.  Although the conjecture
was first disproved in the spectral-to-tiling direction in dimensions at least
five by Tao \cite{tao-counterexample}, and then in dimension four by Matolcsi
\cite{matolcsi-counterexample}, both directions are now known to fail for
general sets in every dimension $d\ge3$: there are spectral non-tiles
\cite{kolountzakis-matolcsi} and non-spectral tiles
\cite{farkas-matolcsi-mora} already in $\R^3$.  Its one-dimensional form
nevertheless remains subtle even for finite unions of intervals.

For convex bodies, the conjecture is known in every dimension.  Earlier work
established central-symmetry restrictions and treated planar domains and
three-dimensional polytopes
\cite{kolountzakis-convex,iosevich-katz-tao,greenfeld-lev}.  Lev and Matolcsi
ultimately proved the full result for all convex bodies
\cite{lev-matolcsi-convex}.

Finite abelian groups form an important discrete model for the conjecture and
have also supplied constructions that lift to Euclidean counterexamples.  Over
prime fields, the conjecture holds in $\Z_p^2$
\cite{iosevich-mayeli-pakianathan}, whereas counterexamples occur in
higher-dimensional vector spaces \cite{aten-et-al}.  For cyclic groups, notable
positive results include $\Z_{pqr}$ \cite{shi-pqr} and, more recently,
$\Z_{pqrs}$ \cite{kiss-et-al-pqrs}.  These finite-group results illustrate both
the arithmetic content of the conjecture and its close connection with the
one-dimensional problem.

For a union of two intervals, \L aba proved the full equivalence between
spectrality and translational tiling \cite{laba-two}.  Bose, Anil Kumar,
Krishnan, and Madan established the implication from tiling to spectrality for
three intervals and proved the reverse implication in several cases, including
three equal lengths, as well as under an additional hypothesis on the spectrum
\cite{bose-et-al}.  Bose and Madan subsequently treated all but one exceptional
configuration and related the remaining case to intersections of generalized
Vandermonde varieties on the three-torus \cite{bose-madan}.  Thus the general
implication from spectrality to tiling for three intervals was left open.
An important structural result in this direction is that spectra of finite
unions of intervals are periodic \cite{bose-madan-periodic}; in fact,
periodicity holds for every bounded spectral set in one dimension
\cite{iosevich-kolountzakis}.

There is also a substantial arithmetic approach to finite unions of
intervals.  \L aba related the one-dimensional spectral set conjecture to
multiplicative properties of roots of associated polynomials
\cite{laba-polynomials}.  For unions of finitely many unit intervals,
Konyagin and \L aba developed this connection further, reducing spectral and
tiling questions to cyclotomic-polynomial conditions and tilings of the
integers by finite sets \cite{konyagin-laba}.  Dutkay and Jorgensen showed that
the spectral-to-tiling implication in dimension one is equivalent to a
universal tiling conjecture and to corresponding assertions for restricted
classes such as finite unions of intervals with rational or integer endpoints
\cite{dutkay-jorgensen-universal}.

Our main result completes the proof of Fuglede's conjecture for unions of at
most three intervals.

\begin{theorem}\label{thm:main}
Let $\Omega\subset\R$ be a bounded measurable set which, up to a null set,
is a union of at most three bounded intervals.  If $\Omega$ is spectral, then $\Omega$
tiles $\R$ by translations.
\end{theorem}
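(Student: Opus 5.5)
We reduce at once to the genuinely new case. If $\Omega$ is, up to a null set, a single interval it tiles trivially, and the two-interval case is \L aba's theorem \cite{laba-two}. So we may assume $\Omega=\bigcup_{j=1}^3[a_j,b_j]$ with $a_1<b_1<a_2<b_2<a_3<b_3$, and after an affine normalisation $|\Omega|=1$. By \cite{bose-et-al,bose-madan}, every configuration except the single exceptional one left open in \cite{bose-madan} is already settled. Our plan is nevertheless to give a self-contained argument that yields the spectral-to-tiling implication directly. The argument starts from the operator-theoretic description of spectra. By the classical Fuglede correspondence, a spectrum $\Lambda$ of $\Omega$ gives a self-adjoint extension $A$ of $-\frac{i}{2\pi}\frac{d}{dx}$ on $\Omega$ whose eigenfunctions are $e_\lambda$, $\lambda\in\Lambda$. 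Such extensions are parametrised by unitary $3\times 3$ matrices $U$ that relate boundary values via $f(a_\cdot)=Uf(b_\cdot)$.

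\textbf{Step 1: the secular determinant.} The condition that $e_\lambda$ satisfy the boundary condition is
\[
  \det\bigl(I-U\,D(\lambda)\bigr)=0,\qquad D(\lambda)=\diag\bigl(e^{2\pi i\lambda(b_j-a_{\sigma(j)})}\bigr)
\]
in suitable coordinates. Here $\Lambda$ must be exactly the zero set, with all zeros simple, since the exponentials form an orthonormal basis. This makes $F(\lambda)=\det(I-UD(\lambda))$ an exponential polynomial in the frequencies $b_j-a_k$. Orthogonality forces $\Lambda-\Lambda\subset Z(\widehat{\1_\Omega})\cup\{0\}$. Periodicity of $\Lambda$ \cite{iosevich-kolountzakis,bose-madan-periodic} then gives $\Lambda=\Lambda+p\Z$ with $p\in\Z_{>0}$ (using $|\Omega|=1$), and $\Lambda$ is a union of $p$ cosets of $p\Z$.

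\textbf{Step 2: cofactor rigidity.} Simplicity of zeros gives $F'(\lambda)\ne0$ on $\Lambda$. By Jacobi's formula, $F'$ is a combination of the entries of $\adj{I-UD(\lambda)}$. At each $\lambda\in\Lambda$ the matrix $I-UD(\lambda)$ has rank exactly $2$, so the adjugate is of rank one, namely $v w^{*}$ with $v$ the boundary-value vector of $e_\lambda$. Evaluating along each coset $\lambda_0+p\Z$, the entries of the adjugate become trigonometric polynomials in $n$. Their vanishing or non-vanishing pattern must be constant along the coset. We plan to extract from this rigidity arithmetic constraints on the six endpoints. Specifically, the differences $b_j-a_k$ that carry nonzero coefficients must satisfy integrality relations modulo $1/p$, and $U$ must be a generalised permutation matrix up to those relations.

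\textbf{Step 3: from arithmetic to tiling.} Once the gaps and lengths are shown to lie in $\frac1p\Z$ (or rational with controlled denominators), $\Omega$ becomes a union of intervals of a lattice $\frac1p\Z$. By the Konyagin--\L aba cyclotomic criterion \cite{konyagin-laba}, spectrality then translates into conditions (T1)--(T2) on the mask polynomial. For sets of at most three blocks this can be checked to imply tiling, or can be compared directly with the tiling classification of \cite{bose-et-al}. The remaining possibility, where $U$ couples all three boundary points non-trivially, should correspond to the exceptional Vandermonde configuration of \cite{bose-madan}.

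The main obstacle is Step 2 in that fully coupled case. There one must show that a rank-one adjugate with constant support along an infinite arithmetic progression is incompatible with irrational endpoint data. Our first attempt will apply Kronecker's theorem to the orbit $n\mapsto(e^{2\pi i np(b_j-a_k)})$ on a torus. Its closure forces the cofactor identities to hold on an entire subtorus, hence identically as exponential polynomials. Comparing coefficients should then rule out generic unitary $U$ and leave only the arithmetic cases handled in Step 3.
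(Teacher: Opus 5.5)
Your outline has a genuine gap exactly where the proof has to happen, in Step~2, and the target you set for Step~3 is false.

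\textbf{Step~2: the identity and its promotion.} The rank-one adjugate $\kappa\,vw^*$ has $v$ and $w$ with unimodular entries (endpoint values of $e_\lambda$). So every entry of the adjugate has modulus $|\kappa(\lambda)|\neq0$ at every spectral point, and a \emph{constant vanishing pattern} carries no information. What the rank-one structure does give is a specific identity. Take $M(\zeta)=\diag(z_1,z_2,z_3)-U$ with $z_j=e^{2\pi i\zeta\ell_j}$. Note that the secular determinant depends only on the lengths: its frequencies are subset sums of $\ell_1,\ell_2,\ell_3$, not the $b_j-a_k$. Then the diagonal cofactors satisfy $z_1C_1=z_2C_2=z_3C_3$ on $\Lambda$.

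Your promotion mechanism does not work. Kronecker along one coset $\lambda_0+p\Z$ only gives vanishing on a translate of a closed subgroup of codimension one in the torus traced out by $\zeta\in\R$; the relation $p(\ell_1+\ell_2+\ell_3)=p\in\Z$ always cuts it down. Vanishing there is not an identity of exponential polynomials: for example, $e^{2\pi iL\zeta}-e^{2\pi iL\lambda_0}$ vanishes on $\lambda_0+L^{-1}\Z$, which contains your coset. The missing idea is density against frequency width. $\Lambda$ has density exactly $L$, the frequency width of $F$. By contrast, $z_iC_i-z_jC_j$ has all its frequencies in $[\min(\ell_i,\ell_j),\,L-\min(\ell_i,\ell_j)]$, so it must vanish identically. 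The paper gets this from a zero count for exponential polynomials. With your periodicity input you could instead split frequencies modulo $1/p$: each class becomes a polynomial of degree $<p$ in $e^{2\pi i\zeta/p}$ that vanishes at the $p$ distinct points coming from the $p$ cosets. Either way, it is this count, not Kronecker, that does the work. Also, orthonormality alone gives only $\operatorname{rank}M(\lambda)=2$, not simple zeros of $F$; simplicity needs the computation $F'(\lambda)=2\pi iL\,\kappa(\lambda)$.

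\textbf{Step~3: rationality cannot be forced.} Nothing can force the lengths into $\frac1p\Z$, or even into $\mathbb Q$. Take irrational $\ell\in(0,\tfrac12)$ and $\Omega=[0,\ell)\cup[1,\tfrac32)\cup[\ell+2,\tfrac52)$. This set has measure one, its $\tfrac12\Z$-fibers are $\{0,2\}$ and $\{2,4\}$, and $\{0,\tfrac12\}+2\Z$ is a spectrum. Its lengths $\ell,\tfrac12,\tfrac12-\ell$ are irrational. It is not a fundamental domain of $\Z$, so its boundary unitary is not monomial. Konyagin--\L aba says nothing about such sets, and this non-monomial additive configuration is exactly what your deferred case would have to cover.

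The paper handles it with two ingredients your plan lacks. First, the global identities classify $U$ into three cases:
\begin{itemize}
\item $U$ is monomial, and endpoint periodization makes $\Omega$ a fundamental domain of $L\Z$;
\item $\ell_1=\ell_2=\ell_3$;
\item $\ell_3=\ell_1+\ell_2$, with $U$ in a specific block form.
\end{itemize}
In the last two cases $F$ is a polynomial in the single exponential $e^{2\pi ih\zeta}$. Hence $\Lambda$ is a union of $k=3$ or $k=2$ cosets of $h^{-1}\Z$, and $\Omega$ is a $k$-fold multitile of $h\Z$. Second, a fiber lemma finishes the argument. Vanishing sums of two or three unit vectors are rigid, so every fiber is a coset of the order-$k$ subgroup of $\Z/Q\Z$. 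This yields a common tiling complement with no arithmetic condition on the lengths.
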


Together with the tiling-to-spectral implication for three intervals proved in
\cite{bose-et-al}, Theorem~\ref{thm:main} establishes Fuglede's conjecture for
unions of three intervals.

The proof yields the following more precise geometric description.  As
usual, all tiling and fundamental-domain statements are understood up to
null sets.

\begin{theorem}\label{thm:geometric-trichotomy}
Let
\[
   \Omega=[a_1,b_1)\cup[a_2,b_2)\cup[a_3,b_3)
\]
be spectral, where the displayed intervals are its three pairwise disjoint
components.  Let $\ell_j=b_j-a_j>0$, and put
$L=\ell_1+\ell_2+\ell_3$.  After relabeling the three interval components if
necessary, at least one of the following geometric alternatives holds:
\begin{enumerate}[label=\textup{(\roman*)}]
\item $\Omega$ is a fundamental domain for the lattice $L\Z$;
\item $\ell_1=\ell_2=\ell_3=h=L/3$.  Equivalently, each component is a
fundamental domain for $h\Z$, and $\Omega$ is a three-fold lattice
multitile by $h\Z$;
\item $\ell_3=\ell_1+\ell_2=h=L/2$ and
$b_1\equiv a_2\pmod{h\Z}$.  On the circle $\R/h\Z$, the images of
$[a_1,b_1)$ and $[a_2,b_2)$ are adjacent half-open arcs that partition the
circle, while $[a_3,b_3)$ is a fundamental domain for $h\Z$.  Consequently,
$\Omega$ is a two-fold lattice multitile by $h\Z$.
\end{enumerate}
\end{theorem}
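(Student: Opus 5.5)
The plan follows the abstract: convert spectrality into a spectral problem for a self-adjoint realization of the derivative on $\Omega$, and extract rigidity from its secular determinant. By Fuglede's correspondence for finite unions of intervals, a spectrum $\Lambda$ of $\Omega$ together with the orthonormal basis $E(\Lambda)$ determines a self-adjoint extension $A=\frac{1}{2\pi i}\frac{d}{dx}$ on $L^2(\Omega)$ whose eigenfunctions are exactly the $e_\lambda$, $\lambda\in\Lambda$. Its domain is described by boundary conditions $f(b)=U f(a)$, where $f(a)=(f(a_j))_{j}$, $f(b)=(f(b_j))_j$, and $U$ is a $3\times 3$ unitary matrix. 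Then $\lambda\in\Lambda$ exactly when $\det\bigl(U-D(\lambda)\bigr)=0$ with $D(\lambda)=\diag(e^{2\pi i\lambda\ell_j})$ after a phase normalization. Moreover, the eigenvector at $\lambda$ must be proportional to $(e^{2\pi i\lambda a_j})_j$, since it comes from a single exponential. The spectrum has counting density $L$, and by the periodicity result \cite{iosevich-kolountzakis,bose-madan-periodic} it is periodic. Hence $\Lambda$ is a finite union of translates of some $p\Z$.

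The core step is cofactor rigidity. Write $F(\lambda)=\det(U-D(\lambda))$, an exponential polynomial in $z_j=e^{2\pi i\lambda\ell_j}$. At each $\lambda\in\Lambda$ the matrix $U-D(\lambda)$ has a kernel spanned by $v(\lambda)=(e^{2\pi i\lambda a_j})_j$. Generically the rank is $2$, so every column of $\adj{U-D(\lambda)}$ is a multiple of $v(\lambda)$. The cofactors are affine-multilinear in the $z_j$, and we get the identities
\[
   \adj{U-D(\lambda)}_{jk}\,e^{2\pi i\lambda a_m}=\adj{U-D(\lambda)}_{mk}\,e^{2\pi i\lambda a_j},
   \qquad \lambda\in\Lambda .
\]
Multiplicity of zeros is controlled by self-adjointness, which makes all zeros real and simple in the appropriate sense. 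Each identity is an exponential polynomial vanishing on a periodic set of density $L$. We then compare frequencies: such an exponential polynomial has frequencies inside an interval of length less than $L$, while it vanishes on a set of density $L$. Either it vanishes identically, or a Pólya–Turán type zero-counting bound is contradicted. Identically vanishing exponential polynomials force coincidences among the frequencies $\ell_j$, $a_j$, $b_j$ and among the entries of $U$.

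The rest is a case analysis according to which entries of $U$ vanish, that is, according to the connectivity graph of the boundary conditions. If $U$ is a full cyclic coupling, the identities force $\Lambda\subset\frac1L\Z$ with full density. This gives $\Lambda=\frac1L\Z+c$, and Fuglede's lattice theorem \cite{fuglede} yields alternative (i). If $U$ decouples completely, so that it is diagonal up to permutation, each component separately carries spectrum $\frac1{\ell_j}\Z$ shifts, and the density and orthogonality force $\ell_1=\ell_2=\ell_3$. This is case (ii), and the multitiling statement is immediate. If $U$ couples two intervals and leaves one separate, the separate one contributes $\frac1{\ell_3}\Z$. The coupled pair acts as a two-interval system whose spectrum, by \L aba's analysis \cite{laba-two} or by repeating the rigidity argument, is $\frac1{\ell_1+\ell_2}\Z$ and requires adjacency modulo $\ell_1+\ell_2$. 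Mutual orthogonality between the two families then forces $\ell_3=\ell_1+\ell_2$, which gives (iii). Finally, the geometric conclusions in (ii) and (iii) imply tiling, since a lattice multitile whose components are each fundamental domains tiles; this converts Theorem~\ref{thm:geometric-trichotomy} into Theorem~\ref{thm:main}.

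I expect the main obstacle to be the middle step. It has two parts: showing that the cofactor identities truly vanish identically, which needs a sharp zero-density bound for exponential polynomials with possibly rationally dependent frequencies; and handling the non-generic points where the rank of $U-D(\lambda)$ drops to $1$. My first attempt would be to argue that rank-one points form a set of density zero, or a set of sub-full density, inside the periodic $\Lambda$. Then, if they exist, they occur in a positive-density periodic sublattice, and on that sublattice all $2\times2$ minors vanish, which by itself forces the equal-length degeneracy of case (ii).
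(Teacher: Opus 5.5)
There is a genuine gap. Your set-up (boundary unitary, secular determinant, null vector $v(\lambda)=(e^{2\pi i\lambda a_j})_j$, density $L$) coincides with the paper's, but the rigidity step fails as you formulate it.

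Your identities $\adj{U-D(\lambda)}_{jk}\,e^{2\pi i\lambda a_m}=\adj{U-D(\lambda)}_{mk}\,e^{2\pi i\lambda a_j}$ use only the right kernel, so the positions of the components survive as frequencies. Take $k=3$, $(j,m)=(1,2)$ and say $a_1<a_2$. The frequencies are among $a_1,b_1,a_2,b_2$, so in general the width is the whole span from $a_1$ to $b_2$. That exceeds $L$ as soon as the gap between the first two components exceeds $\ell_3$, and zero counting then yields nothing. Cross-multiplying two columns to cancel the $a_j$ does not help: it only produces $2\times2$ minors of the adjugate, which are multiples of $\det(U-D(\lambda))$ and carry no information.

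The missing idea is unitarity. For real $\lambda$ both $U$ and $D(\lambda)$ are unitary, so $D(\lambda)v(\lambda)$ spans the \emph{left} kernel, and $\adj{U-D(\lambda)}=\kappa(\lambda)\,v(\lambda)\bigl(D(\lambda)v(\lambda)\bigr)^*$ with $\kappa(\lambda)\ne0$. On the diagonal, $|v_j|=1$ removes the positions. This gives $z_1C_1=z_2C_2=z_3C_3$ on $\Lambda$, where $z_j=e^{2\pi i\lambda\ell_j}$ and $C_j$ is the $(j,j)$ cofactor. The difference $z_iC_i-z_jC_j$ has frequencies among $\ell_i,\ell_j,\ell_i+\ell_k,\ell_j+\ell_k$, hence width at most $L-2\min(\ell_i,\ell_j)<L$. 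Only at this point does density $L$ force it to vanish identically.

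Your ``main obstacle'' is not one. $\ker(U-D(\lambda))\cong\ker(A-\lambda)$, which is one-dimensional because $A$ is diagonal in $E(\Lambda)$. So the rank is exactly $2$ at every spectral point, as your own claim that the kernel is spanned by $v(\lambda)$ already says. No periodicity input is needed.

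The case analysis by connectivity of $U$ also breaks down. A $U$ that is block diagonal after relabeling (in particular a diagonal one) cannot occur. At real zeros of a block determinant, $U-D(\lambda)$ would have null vectors supported on a proper subset of the components, giving eigenfunctions of $A$ that vanish on a component. But every eigenspace is $\C e_\lambda$, and $e_\lambda$ vanishes nowhere. So your routes to (ii) and (iii) describe empty cases, and the ``orthogonality between the two families'' meant to force $\ell_3=\ell_1+\ell_2$ has no basis. Conversely, connectivity does not single out (i): $[0,1)\cup[2,3)\cup[4,5)$ falls under (ii) with a circulant $U$ that has no zero entries.

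In the paper the length relations come from the global identities themselves. Linear independence of exponentials forces $u_{jj}=0$ and all principal $2\times2$ minors of $U$ to vanish, hence $U$ is monomial, unless frequencies collide. A monomial $U$ is necessarily a single $3$-cycle and gives (i); your appeal to Fuglede's lattice theorem is fine there. Frequencies collide only if two lengths are equal or one is the sum of the other two.
\begin{itemize}
\item Exactly two equal lengths without an additive relation still force $U$ monomial.
\item Three equal lengths give (ii).
\item For non-monomial $U$ with $\ell_3=\ell_1+\ell_2$ one gets, after relabeling, rows $(0,\alpha,0)$, $(c,0,d)$, $(e,0,\alpha c)$, which couple all three components.
\end{itemize}
In the last case $F$ is a quadratic in $e^{2\pi ih\lambda}$, so $\Lambda+h^{-1}=\Lambda$. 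The first boundary row $e^{2\pi i\lambda b_1}=\alpha e^{2\pi i\lambda a_2}$, taken at $\lambda$ and $\lambda+h^{-1}$, gives $b_1\equiv a_2\pmod{h}$.

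Finally, your closing claim that a lattice multitile whose components are fundamental domains must tile is false. $[0,1)\cup[2,3)\cup[5,6)$ does not tile $\R$, because $\{0,2,5\}$ does not tile $\Z$. Passing from (ii) or (iii) to a tiling needs the coset structure of $\Lambda$, as in the paper's fiber lemma.
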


\paragraph{Contributions and relation to previous work.}
The description of spectral sets through self-adjoint realizations of the
derivative and unitary boundary conditions is classical; see
\cite{fuglede,dutkay-jorgensen,dutkay-jorgensen-momentum,
chakraborty-dutkay-jorgensen,chakraborty-dutkay-disconnected}.  Recent work has further developed the
connection with groups of local translations for finite unions of intervals
\cite{ducasse-dutkay-fernandez}.  The new contribution here is a rigidity
mechanism built on this framework.  The secular determinant encodes the
entire spectrum and yields both its density and a frequency-width uniqueness
principle.  Rank-one adjugates then give cofactor identities on the spectrum,
which uniqueness promotes to global identities and ultimately to a complete
trichotomy for the boundary unitary and the three interval lengths.  This
route differs from the earlier reduction of the remaining case to generalized
Vandermonde varieties \cite{bose-madan}: it derives the monomial,
equal-length, and additive-resonance alternatives directly from unrestricted
three-interval spectrality.  For a recent overview of the Fuglede problem and
its open directions, see \cite{kolountzakis-survey}.

\paragraph{Idea of the proof.}
The main logical chain is
\[
 \Lambda\longrightarrow A\longrightarrow U\longrightarrow F
 \longrightarrow \text{global cofactor identities}
 \longrightarrow \text{trichotomy}\longrightarrow \text{tiling}.
\]
Starting from a spectrum $\Lambda$, we diagonalize a self-adjoint realization
of $D=(2\pi i)^{-1}d/dx$ in the exponential basis.  Its domain is encoded by a
boundary unitary $U$ coupling the left and right endpoints of the interval
components.  This viewpoint belongs to the general theory of self-adjoint
extensions on unions of intervals; see, for example,
\cite{dutkay-jorgensen}.  The corresponding secular determinant is an
exponential polynomial whose zeros are exactly the points of $\Lambda$, all
simple.  A sharp zero count therefore gives the density of $\Lambda$ and a
uniqueness principle for exponential polynomials of smaller frequency width.

At each spectral point, the rank-one structure of the adjugate of the secular
matrix gives identities among its diagonal cofactors.  The uniqueness
principle makes these identities hold for every real parameter, converting the
analytic spectral information into algebraic restrictions on $U$.  Those
restrictions leave three possibilities: a monomial boundary unitary, three
equal interval lengths, or one additive relation among the lengths with a
specific block form for $U$.  The monomial case yields a lattice tiling by
endpoint periodization.  In the other two cases the spectrum is a union of two
or three lattice cosets and $\Omega$ is a lattice multitile; a fiber argument
then produces an ordinary translational tiling.  This route uses no prior
periodicity, rationality, or lattice-containment assumption on $\Lambda$.

\paragraph{Organization of the paper.}
Section~\ref{sec:notation} fixes the
interval notation and the harmless measure-theoretic reductions.
Section~\ref{sec:zero-count} establishes the zero-counting input, and
Section~\ref{sec:operator} constructs the spectral
derivative and its boundary unitary.  Sections~\ref{sec:cofactor} and
\ref{sec:classification} derive and classify the resulting algebraic
constraints.  Section~\ref{sec:fiber} proves the fiber lemma used to pass from
low-multiplicity lattice multitilings to ordinary tilings.
Section~\ref{sec:fewer} treats two components as a warm-up, and
Section~\ref{sec:completion} then completes the three-component case.
Section~\ref{sec:n-intervals} records which parts of the argument extend to
an arbitrary finite union of intervals and where the proof uses
three-component rigidity.

\section{Notation and reductions}\label{sec:notation}

Throughout, inner products are linear in the first variable.
For a square matrix $B$, let $B^{(j,k)}$ be the matrix obtained by deleting
row $j$ and column $k$.  The corresponding minor is $\det B^{(j,k)}$, and
the $(j,k)$ cofactor is
\[
 \operatorname{cof}_{jk}(B)=(-1)^{j+k}\det B^{(j,k)}.
\]
Thus the cofactor is the minor with its checkerboard sign.  We write $B^*$
for the Hermitian adjoint of $B$, whereas $\adj{B}$ denotes the adjugate,
namely the transpose of the cofactor matrix.  Equivalently,
\[
 (\adj{B})_{kj}=\operatorname{cof}_{jk}(B).
\]
The adjugate satisfies
\[
 B\,\adj{B}=\adj{B}\,B=(\det B)I.
\]

Overlapping or adjacent intervals may be merged, so it is enough to consider the decomposition of $\Omega$ into its distinct positive-length interval components.  The main part of the proof treats three components:
\[
   \Omega=\bigcup_{j=1}^3 [a_j,b_j),
   \qquad
   \ell_j=b_j-a_j>0,
   \qquad
   L=|\Omega|=\ell_1+\ell_2+\ell_3.
\]
Replacing $\Omega$ by a set differing from it by a null set does not change
$L^2(\Omega)$ or spectrality.  All translation sets constructed below are
countable.  Hence an almost-everywhere tiling obtained for one representative
also holds for every null-equivalent representative, because a countable
union of translates of the symmetric difference is null.  We may therefore
work throughout with disjoint half-open interval representatives.

\section{Zeros of exponential polynomials}\label{sec:zero-count}

The purpose of this section is not only to count zeros.  The resulting
estimate will give the exact density of the spectrum through the secular
determinant and, more importantly, a frequency-width uniqueness principle
that promotes identities valid on the spectrum to global
exponential-polynomial identities.

For an exponential polynomial
\[
 P(z)=\sum_{r=1}^N c_r e^{2\pi i\alpha_r z},
 \qquad z\in\C,\quad c_r\in\C,\quad \alpha_r\in\R,
\]
first combine terms having the same value of $\alpha_r$ and delete any terms
whose resulting coefficient is zero.  The remaining numbers $\alpha_r$ are
called the \emph{frequencies} of $P$, and together they form its
\emph{frequency set}.  If $P$ is nonzero, its \emph{frequency width} is the
diameter of this set, that is, its largest frequency minus its smallest
frequency.

We shall also use the elementary fact that exponential functions with
distinct real frequencies are linearly independent.  More precisely, if
$\alpha_1,\ldots,\alpha_N\in\R$ are pairwise distinct and
\[
   \sum_{r=1}^N c_r e^{2\pi i\alpha_r z}=0
   \qquad(z\in\C),
\]
then $c_1=\cdots=c_N=0$.  Indeed, taking derivatives of orders
$k=0,1,\ldots,N-1$ and evaluating at $z=0$ gives
\[
   \sum_{r=1}^N c_r(2\pi i\alpha_r)^k=0
   \qquad(k=0,1,\ldots,N-1).
\]
The coefficient matrix is a Vandermonde matrix and is invertible because
the $\alpha_r$ are pairwise distinct.

\begin{lemma}\label{lem:chebyshev}
Let $\beta_0<\cdots<\beta_m$ be real.  A nonzero real exponential polynomial
\[
   q(y)=\sum_{j=0}^m d_j e^{\beta_j y},
   \qquad d_j\in\R,
\]
has at most $m$ real zeros, counted with multiplicity.
\end{lemma}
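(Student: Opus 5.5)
We argue by induction on $m$, using Rolle's theorem after multiplying by a suitable exponential factor; this is the classical Descartes--Pólya argument for Chebyshev systems of exponentials.

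For the base case $m=0$, we have $q(y)=d_0e^{\beta_0 y}$ with $d_0\neq 0$ because $q$ is nonzero. This function never vanishes, so it has $0=m$ zeros.

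For the inductive step, suppose the claim holds for $m-1$ and let $q$ have $m+1$ terms. If $d_0=0$, then $q$ is a nonzero exponential polynomial with at most $m$ terms, and the induction hypothesis (applied after relabeling the remaining exponents) gives at most $m-1\le m$ zeros. So assume $d_0\neq0$ and set
\[
   r(y)=e^{-\beta_0 y}q(y)=d_0+\sum_{j=1}^m d_j e^{(\beta_j-\beta_0)y}.
\]
Multiplying by the nowhere-vanishing smooth factor $e^{-\beta_0 y}$ preserves real zeros and their multiplicities, since $r^{(k)}$ is a combination of $q,q',\dots,q^{(k)}$ with an invertible triangular relation. Differentiating gives
\[
   r'(y)=\sum_{j=1}^m d_j(\beta_j-\beta_0)e^{(\beta_j-\beta_0)y},
\]
which is a real exponential polynomial with $m$ distinct exponents. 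Its coefficients $d_j(\beta_j-\beta_0)$ vanish only if every $d_j=0$ for $j\ge1$. If that happens, $q=d_0e^{\beta_0y}$ has no zeros and we are done. Otherwise $r'$ is nonzero, and by induction it has at most $m-1$ real zeros counted with multiplicity.

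It remains to show that $r$ has at most $m$ zeros with multiplicity. Suppose instead that $r$ has distinct real zeros $y_1<\dots<y_s$ with multiplicities $k_1,\dots,k_s$ and $\sum k_i\ge m+1$. Each zero $y_i$ of $r$ is a zero of $r'$ of multiplicity $k_i-1$. Rolle's theorem also gives a zero of $r'$ strictly inside each gap $(y_i,y_{i+1})$, and these $s-1$ points are distinct from the $y_i$. Hence $r'$ has at least
\[
   \sum_i (k_i-1)+(s-1)=\sum_i k_i-1\ge m
\]
zeros counted with multiplicity, contradicting the bound $m-1$.

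Two points need care. First, multiplicity counting requires $r$ to be not identically zero on $\R$. This holds because $q$ is nonzero as a formal exponential polynomial and real exponentials with distinct exponents are linearly independent; this follows from the same Vandermonde argument given before the lemma, with $e^{\beta y}$ in place of $e^{2\pi i\alpha z}$. Second, the invariance of multiplicity under multiplication by a nonvanishing smooth function must be checked. Both are routine, so the only substantive step is the Rolle counting above.
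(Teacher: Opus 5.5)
Your proposal is correct and follows essentially the same route as the paper: induction on $m$, normalization by $e^{-\beta_0 y}$, the inductive bound applied to the derivative, and the multiplicity form of Rolle's theorem on a finite collection of zeros of total multiplicity $N$, giving $N-1\le m-1$. Your separate treatment of the case $d_0=0$ is unnecessary, since the normalization and differentiation work regardless of $d_0$, but it is harmless.
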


\begin{proof}
The assertion follows by induction on $m$, the case $m=0$ being immediate.
Multiplication by $e^{-\beta_0y}$ does not change the zeros or their
multiplicities.  Differentiate the resulting function.  If the derivative
vanishes identically, the function is a nonzero constant and has no zeros.
Otherwise, its derivative is a nonzero exponential polynomial with at most
$m$ terms and distinct real exponents, and hence has at most $m-1$ zeros by
induction.

For any finite collection of zeros of the normalized function having total
multiplicity $N$, the multiplicity version of Rolle's theorem
\cite[Chapter~I]{karlin-studden} gives at least
$N-1$ zeros for the derivative of that function: a zero of multiplicity $r$ contributes
$r-1$, and between two successive distinct zeros there is one further zero.
Thus $N-1\le m-1$.  Since this applies to every finite collection, the total
number of zeros is at most $m$.
\end{proof}

\begin{lemma}\label{lem:zero-count}
Let $m$ be a nonnegative integer, let
$\alpha_0,\ldots,\alpha_m\in\R$ satisfy
$\alpha_0<\alpha_1<\cdots<\alpha_m$, and let
$c_0,\ldots,c_m\in\C$ satisfy $c_0\ne0$ and $c_m\ne0$.  Define the entire function
$P:\C\to\C$ by
\[
   P(z)=\sum_{j=0}^m c_j e^{2\pi i\alpha_j z},
   \qquad z\in\C.
\]
There exists $Y>0$ such that every zero $z$ of $P$ satisfies
$|\operatorname{Im}z|<Y$.  For $R\ge0$, let $N_P(R)$ be the number of zeros
of $P$, counted with multiplicity, that satisfy
$|\operatorname{Re}z|\le R$.  Then, as $R\to\infty$,
\[
   N_P(R)=2(\alpha_m-\alpha_0)R+O(1).
\]
Here the constant implicit in $O(1)$ may depend on $P$ but is independent of
$R$.
In particular, if $Z_{\R}(P)$ is the set of distinct real zeros, then
\[
 \limsup_{R\to\infty}
 \frac{\#(Z_{\R}(P)\cap[-R,R])}{2R}
 \le \alpha_m-\alpha_0.
\]
\end{lemma}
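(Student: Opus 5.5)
The plan is to prove the strip bound by dominance of the extreme terms, and then to count zeros with the argument principle applied to a long rectangle. Lemma~\ref{lem:chebyshev} is not needed for this statement.

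\emph{Strip bound.} Write $z=x+iy$, so that $|e^{2\pi i\alpha_j z}|=e^{-2\pi\alpha_j y}$. Dividing $P$ by $c_0e^{2\pi i\alpha_0 z}$ gives
\[
 1+\sum_{j\ge1}(c_j/c_0)e^{2\pi i(\alpha_j-\alpha_0)z}.
\]
For $y\to+\infty$ every term of the sum is bounded by $|c_j/c_0|e^{-2\pi(\alpha_j-\alpha_0)y}$, which tends to $0$ uniformly in $x$. Choose $Y_+$ so large that the sum has modulus less than $1/2$ whenever $y\ge Y_+$; then $P$ has no zeros there. Symmetrically, dividing by $c_me^{2\pi i\alpha_m z}$ handles $y\le -Y_-$. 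Take $Y$ larger than both $Y_+$ and $Y_-$. The same estimates show more: on the line $y=Y$ one has $P=c_0e^{2\pi i\alpha_0 z}(1+\varepsilon)$ with $|\varepsilon|<1/2$, and similarly on $y=-Y$ with $c_m$. If $m=0$, then $P$ has no zeros and $\alpha_m-\alpha_0=0$, so that case is trivial.

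\emph{Counting.} Consider the rectangle with vertices $\pm R\pm iY$. The argument principle gives $N$ inside as $(2\pi)^{-1}$ times the total change of $\arg P$ around the boundary, provided no zeros lie on the boundary. Along the top edge, traversed from $R+iY$ to $-R+iY$, $\arg P$ changes by $-2\pi\alpha_0\cdot(-2R)$ up to an error less than $\pi$, because $\arg(1+\varepsilon)$ stays in $(-\pi/6,\pi/6)$. Along the bottom edge, traversed from $-R-iY$ to $R-iY$, the change is $2\pi\alpha_m\cdot 2R+O(1)$. Summing the two gives $2\pi\cdot 2(\alpha_m-\alpha_0)R+O(1)$.

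\emph{Vertical edges (the main obstacle).} We must show that the change of $\arg P$ along $x=\pm R$, $|y|\le Y$, is $O(1)$ uniformly in $R$. I would handle this with a standard Jensen-type bound, rather than an argument-counting bound such as $\#\{\text{zeros of } \operatorname{Re}(e^{-i\theta}P)\}$. The function $g(w)=P(\pm R+w)$ is entire, and it is uniformly bounded above on a disk of fixed radius around the segment, since $|P|\le\sum|c_j|e^{2\pi|\alpha_j|(|y|)}$ independently of $x$. It is also bounded below at a fixed point, $|g(2iY)|\ge |c_0|e^{-2\pi\alpha_0\cdot 2Y}/2$, by the strip estimate. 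Jensen's formula then bounds the number of zeros of $P$ in any disk of fixed radius uniformly in the center $x$. Applying the same Jensen argument to $\operatorname{Im}(e^{-i\phi}P)$ along the segment, or more simply using the classical lemma that the argument variation of a function analytic in a disk is bounded by the count of zeros of $\operatorname{Re}(e^{-i\phi}g)$ on the segment, bounds the variation by a constant independent of $R$. Concretely, the restriction $t\mapsto \operatorname{Re}(e^{-i\phi}P(\pm R+it))$ extends holomorphically in $t$ with uniform bounds, so Jensen controls its zero count.

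If zeros lie on the vertical edges, perturb $R$ by a bounded amount. This changes the count only by the uniformly bounded number of zeros lying in a unit-width strip, which is again bounded by Jensen. Hence $N_P(R)=2(\alpha_m-\alpha_0)R+O(1)$. The statement about the limsup follows at once, because the distinct real zeros in $[-R,R]$ are among the zeros counted by $N_P(R)$.
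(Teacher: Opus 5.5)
Your architecture matches the paper's: the strip bound by dominance of the extreme terms, the argument principle on the rectangle with vertices $\pm R\pm iY$, the horizontal edges handled by factoring out $c_0e^{2\pi i\alpha_0z}$ or $c_me^{2\pi i\alpha_mz}$, and the vertical edges handled by bounding the argument variation by $\pi$ times one plus the number of zeros of a real projection of $P$. There is a small slip on the top edge: the change there is $2\pi\alpha_0\cdot(-2R)=-4\pi\alpha_0R$, not $-2\pi\alpha_0\cdot(-2R)$. Your final sum is nevertheless correct.

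The real difference is how zeros on a vertical line are counted. For real $t$, a projection such as
\[
 h_x(t)=\operatorname{Re}\bigl(e^{-i\phi}P(x+it)\bigr)
 =\sum_{j=0}^m\operatorname{Re}\bigl(e^{-i\phi}c_je^{2\pi i\alpha_jx}\bigr)e^{-2\pi\alpha_jt}
\]
is a real exponential polynomial with $m+1$ terms. The paper applies Lemma~\ref{lem:chebyshev} to the equivalent $\operatorname{Im}$ version to bound its zeros by $m$. The same lemma, with a direction chosen to preserve multiplicities, bounds the total multiplicity of zeros of $P$ on any vertical line by $m$; this handles the perturbation of $R$.

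You replace this by Jensen's formula. That route is more robust: it uses only a uniform upper bound in a strip and a uniform lower bound at points above it, so it applies to far more general entire functions. The cost is that its constants depend on the coefficients, whereas the paper's vertical bound $(m+1)\pi$ depends only on $m$. It also needs one extra piece of care.

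Jensen requires a lower bound at the centre of the disk that is uniform in $x$, and for an arbitrary $\phi$ there is none. For $P(z)=e^{2\pi iz}-1$, $x=1/2$, $\phi=\pi/2$ one gets $h_x\equiv0$, so ``uniform bounds'' is not enough as stated.

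The fix is to take $\phi=\arg c_0+2\pi\alpha_0x$. This is legitimate because the variation bound holds for every $\phi$ for which $h_x$ has finitely many zeros on the segment. Then the $\alpha_0$-coefficient of $h_x$ is $|c_0|$, and your choice of $Y_+$ gives $|h_x(2Y)|\ge\tfrac12|c_0|e^{-4\pi\alpha_0Y}$. Jensen on a disk centred at $2Y$ containing $[-Y,Y]$ then works. For $P$ itself, your lower bound at $x+2iY$ is fine as written.

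Finally, your remark that $h_x$ extends holomorphically in $t$ amounts to observing that it is an exponential polynomial in $t$. So the lemma you set aside is the shortest way to close the vertical edges.
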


\begin{proof}
If $m=0$, then $P(z)=c_0e^{2\pi i\alpha_0z}$ has no zeros, and all the
conclusions are immediate.  We therefore assume that $m\ge1$.

Choose $Y>0$ so large that
\[
 \sum_{j=1}^m\frac{|c_j|}{|c_0|}
 e^{-2\pi(\alpha_j-\alpha_0)Y}<1,
 \qquad
 \sum_{j=0}^{m-1}\frac{|c_j|}{|c_m|}
 e^{-2\pi(\alpha_m-\alpha_j)Y}<1.
\]
Such a choice is possible because every exponent occurring in these two
sums is negative.  Since
\[
 |c_j e^{2\pi i\alpha_j(x+iy)}|=|c_j|e^{-2\pi\alpha_jy}
\]
independently of $x$, the first inequality says that the $\alpha_0$ term
strictly dominates the sum of the remaining terms when $y=Y$, and the
second gives the corresponding dominance of the $\alpha_m$ term when
$y=-Y$.  The relevant ratios decrease further when, respectively, $y\ge Y$
and $y\le-Y$.  Hence the same strict dominance holds throughout both
exterior half-planes.  In particular, $P$ has no zero there, and every zero
satisfies $|\operatorname{Im}z|<Y$.

We apply the argument principle to the positively oriented rectangle with
vertices $\pm R\pm iY$, first taking $R$ so that neither vertical side
contains a zero.  On its lower side, traversed from $-R-iY$ to $R-iY$, write
\[
 P(z)=c_m e^{2\pi i\alpha_mz}
 \left(1+\sum_{j=0}^{m-1}\frac{c_j}{c_m}
 e^{2\pi i(\alpha_j-\alpha_m)z}\right).
\]
By the choice of $Y$, the sum inside the parentheses has modulus strictly
less than $1$, uniformly in $\operatorname{Re}z$.  The parenthetical factor
therefore remains in a fixed disk centered at $1$ that does not contain the
origin, so its change of argument is bounded independently of $R$.  The
exponential factor contributes $2\pi\alpha_m(2R)=4\pi\alpha_mR$.  Thus
\[
   \Delta\arg P=4\pi\alpha_m R+O(1).
\]
On the upper side we instead factor out
$c_0e^{2\pi i\alpha_0z}$.  Because this side is traversed from $R+iY$ to
$-R+iY$, the same calculation gives
\[
   \Delta\arg P=-4\pi\alpha_0 R+O(1).
\]

It remains to bound the changes of argument on the vertical sides uniformly
in $R$.  On a zero-free vertical line $\operatorname{Re}z=x$, write
\[
   f_x(y)=P(x+iy)=\sum_{j=0}^m d_j e^{-2\pi\alpha_j y},
   \qquad d_j=c_je^{2\pi i\alpha_jx},
   \qquad -Y\le y\le Y.
\]
Choose $\theta$ so that
\[
   q_\theta(y)=\operatorname{Im}(e^{-i\theta}f_x(y))
\]
is not identically zero and does not vanish at $y=\pm Y$; only finitely many
directions are excluded.  This is a real exponential polynomial, so
Lemma~\ref{lem:chebyshev} gives at most $m$ zeros.  Since $f_x$ is
nonvanishing, choose a continuous argument $f_x(y)=|f_x(y)|e^{i\phi(y)}$.
The equality $q_\theta(y)=0$ is equivalent to
$\phi(y)\in\theta+\pi\mathbb Z$.  Every value $\theta+k\pi$ strictly between
$\phi(-Y)$ and $\phi(Y)$ is therefore attained, by the intermediate value
theorem, at a zero of $q_\theta$; different such values are attained at
different points.  There are at least
$|\phi(Y)-\phi(-Y)|/\pi-1$ such values.  Since $q_\theta$ has at most $m$
zeros, it follows that
\[
 |\phi(Y)-\phi(-Y)|\le(m+1)\pi.
\]
Thus the vertical changes of argument are bounded uniformly in $x$, and
hence in $R$.

The total change of argument around the rectangle is therefore
\[
   4\pi(\alpha_m-\alpha_0)R+O(1).
\]
Dividing this change by $2\pi$, the argument principle gives
\[
 N_P(R)=2(\alpha_m-\alpha_0)R+O(1)
\]
for these admissible values of $R$.  For an
arbitrary $R$, move each vertical side slightly to a zero-free line.  The
total multiplicity of the zeros on any fixed vertical line is at most $m$.
Indeed, write the restriction as $f_x(y)=P(x+iy)$.  If $y_0$ is a zero of
multiplicity $r$, then locally
\[
   f_x(y)=(y-y_0)^r g(y),
   \qquad g(y_0)\ne0.
\]
For all but one direction $\theta$ modulo $\pi$,
$\operatorname{Im}(e^{-i\theta}g(y_0))\ne0$, and hence
$\operatorname{Im}(e^{-i\theta}f_x(y))$ has a zero of exactly the same
multiplicity $r$ at $y_0$.  There are only finitely many zeros on the compact
vertical segment, so a single direction can be chosen simultaneously for all
of them.  Lemma~\ref{lem:chebyshev} then bounds their total multiplicity by
$m$.

Since the zeros of the nonzero entire function $P$ are discrete, each
vertical side can be displaced so slightly that no zeros other than those on
the original side are crossed.  The preceding bound shows that this changes
the count by at most $2m$, independently of $R$.  The formula follows for
every $R$, with an $O(1)$ constant depending on $P$ but not on $R$.

Finally, every distinct real zero in $[-R,R]$ is among the zeros counted by
$N_P(R)$ and contributes multiplicity at least one.  Therefore
\[
 \#(Z_{\R}(P)\cap[-R,R])\le N_P(R).
\]
Dividing by $2R$ and taking the upper limit gives the asserted density bound
for the distinct real zeros.
\end{proof}

\begin{corollary}\label{cor:density-unique}
Let $D>0$, and let $\Lambda\subset\R$ satisfy
\[
   \#(\Lambda\cap[-R,R])=2DR+O(1)
   \qquad(R\to\infty).
\]
Let
\[
   Q(\zeta)=\sum_{r=1}^N c_r e^{2\pi i\alpha_r\zeta},
   \qquad \zeta\in\C,\quad c_r\in\C,\quad \alpha_r\in\R,
\]
be an exponential polynomial satisfying
\[
   Q(\lambda)=0\qquad\text{for every }\lambda\in\Lambda.
\]
Then either $Q$ vanishes identically on $\C$, or its frequency width is at
least $D$.  In particular, if its frequency width is strictly smaller than
$D$, then $Q$ vanishes identically.
\end{corollary}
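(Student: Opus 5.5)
Assume $Q$ does not vanish identically; we show its frequency width $W$ is at least $D$.

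First we normalize $Q$. Combine terms with equal frequencies and delete zero coefficients, as in the definition of frequency set. This changes neither $Q$ nor its zeros. If $Q\not\equiv0$, the linear independence of exponentials with distinct real frequencies, recorded before Lemma~\ref{lem:chebyshev}, shows that at least one coefficient survives. So we may write
\[
   Q(\zeta)=\sum_{j=0}^m c_j e^{2\pi i\alpha_j\zeta},\qquad
   \alpha_0<\cdots<\alpha_m,\qquad c_0\ne0,\quad c_m\ne0,
\]
with $W=\alpha_m-\alpha_0$. These are exactly the hypotheses of Lemma~\ref{lem:zero-count}.

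Next, every $\lambda\in\Lambda$ is a real zero of $Q$, and distinct points of $\Lambda$ are distinct zeros. Hence $\Lambda\subset Z_{\R}(Q)$, and for every $R$,
\[
   \#(\Lambda\cap[-R,R])\le\#(Z_{\R}(Q)\cap[-R,R]).
\]
Dividing by $2R$ and letting $R\to\infty$, the hypothesis $\#(\Lambda\cap[-R,R])=2DR+O(1)$ makes the left side tend to $D$. The density bound at the end of Lemma~\ref{lem:zero-count} bounds the $\limsup$ of the right side by $\alpha_m-\alpha_0=W$. Therefore $D\le W$.

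The case $m=0$ is consistent with this: then $Q$ has no zeros, so $\Lambda$ would be empty, contradicting $\#(\Lambda\cap[-R,R])\sim 2DR$ with $D>0$. The "in particular" clause is just the contrapositive: if $W<D$, then $Q\equiv0$.

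There is no real obstacle here; the only point needing care is the normalization step, namely that "not identically zero" yields a representation with nonzero extreme coefficients, so that Lemma~\ref{lem:zero-count} applies.
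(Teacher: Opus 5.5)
Your proposal is correct and is essentially the paper's argument: assuming $Q\not\equiv0$, you compare the count $2DR+O(1)$ of points of $\Lambda$ in $[-R,R]$ with the bound on real zeros from Lemma~\ref{lem:zero-count}. You use that lemma's $\limsup$ density form rather than the paper's direct comparison with $2WR+O(1)$, which makes no difference, and your explicit normalization step is exactly what is needed to apply the lemma.
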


\begin{proof}
Suppose that $Q$ is not identically zero, and let its frequency width be
$W$.  If $W<D$, Lemma~\ref{lem:zero-count} gives at most
$2WR+O(1)$ distinct real zeros
in $[-R,R]$, whereas the assumed set $\Lambda$ contributes
$2DR+O(1)$ distinct zeros.  This is impossible for large $R$, so $W\ge D$.
\end{proof}

\section{The self-adjoint derivative and its boundary unitary}\label{sec:operator}

The correspondence between spectra and self-adjoint realizations of the
derivative goes back to Fuglede \cite{fuglede}.  Its boundary-unitary form for
unions of intervals was developed systematically by Dutkay and Jorgensen
\cite{dutkay-jorgensen}.  We recall the construction in the notation needed
here.  The new use of this framework begins with the secular determinant in
Subsection~\ref{subsec:secular}: its zero set yields the density and rigidity
statements that drive the later classification.

\subsection{The spectral and minimal derivative operators}

Recall that
\[
   \Omega=\bigcup_{j=1}^3[a_j,b_j),
   \qquad \ell_j=b_j-a_j,
   \qquad L=|\Omega|=\ell_1+\ell_2+\ell_3.
\]
Assume now that $\Omega$ has spectrum $\Lambda$ and retain the notation
$e_\lambda$ introduced above.
Before constructing the associated operator, we record a direct indication
of why a unitary coupling of the endpoint values should exist.  For
$\lambda\in\Lambda$, put
\[
   v_a(\lambda)
   =\bigl(e^{2\pi i\lambda a_1},e^{2\pi i\lambda a_2},
           e^{2\pi i\lambda a_3}\bigr)^{\mathsf T},
   \qquad
   v_b(\lambda)
   =\bigl(e^{2\pi i\lambda b_1},e^{2\pi i\lambda b_2},
           e^{2\pi i\lambda b_3}\bigr)^{\mathsf T}.
\]
If $\lambda\ne\mu$ are in $\Lambda$, their orthogonality gives
\[
  0=\int_\Omega e^{2\pi i(\lambda-\mu)x}\,dx
   =\frac{\ip{v_b(\lambda)}{v_b(\mu)}_{\C^3}
           -\ip{v_a(\lambda)}{v_a(\mu)}_{\C^3}}
          {2\pi i(\lambda-\mu)}.
\]
For $\lambda=\mu$, every coordinate of $v_a(\lambda)$ and
$v_b(\lambda)$ has modulus one, so
$\|v_a(\lambda)\|_{\C^3}^2=\|v_b(\lambda)\|_{\C^3}^2=3$.
Consequently, the Gram matrices of the two endpoint families,
\[
 G_a=\bigl(\ip{v_a(\lambda)}{v_a(\mu)}_{\C^3}\bigr)_
          {\lambda,\mu\in\Lambda},
 \qquad
 G_b=\bigl(\ip{v_b(\lambda)}{v_b(\mu)}_{\C^3}\bigr)_
          {\lambda,\mu\in\Lambda},
\]
agree entry by entry:
\[
 \ip{v_a(\lambda)}{v_a(\mu)}_{\C^3}
 =
 \ip{v_b(\lambda)}{v_b(\mu)}_{\C^3}
 \qquad(\lambda,\mu\in\Lambda).
\]
Therefore the assignment
$v_a(\lambda)\mapsto v_b(\lambda)$ defines an isometry between their linear
spans.  The self-adjoint extension argument below makes this endpoint
coupling a uniquely determined unitary on $\C^3$.

Define an operator $A$ by
\[
   Af=\sum_{\lambda\in\Lambda}
      \lambda\ip{f}{e_\lambda}e_\lambda,
\]
with domain
\[
   \Dom(A)=\left\{f\in L^2(\Omega):
      \sum_{\lambda\in\Lambda}\lambda^2
      |\ip{f}{e_\lambda}|^2<\infty\right\}.
\]
This is the maximal natural domain for the displayed spectral formula.  Indeed,
the expansion of $f$ in the given orthonormal basis is
\[
   f=\sum_{\lambda\in\Lambda}\ip{f}{e_\lambda}e_\lambda.
\]
The series for $Af$ is formally obtained by multiplying its $\lambda$th
Fourier coefficient by $\lambda$, and Parseval's identity gives
\[
   \|Af\|_2^2
   =\sum_{\lambda\in\Lambda}\lambda^2
      |\ip{f}{e_\lambda}|^2.
\]
Thus the defining condition is exactly the condition that the spectral series
for $Af$ represent an element of $L^2(\Omega)$.
Because $L^2(\Omega)$ is separable, the orthonormal basis indexed by
$\Lambda$ is countable.  The coefficient map
\[
 \mathcal F:L^2(\Omega)\longrightarrow\ell^2(\Lambda),
 \qquad
 \mathcal Ff=(\ip{f}{e_\lambda})_{\lambda\in\Lambda},
\]
is unitary, and $A=\mathcal F^{-1}M_\lambda\mathcal F$, where
$M_\lambda$ is the coordinate multiplication operator defined by
\[
 (M_\lambda c)_\nu=\nu c_\nu\qquad(\nu\in\Lambda),
\]
with domain
\[
 \Dom(M_\lambda)
 =\left\{c=(c_\nu)_{\nu\in\Lambda}\in\ell^2(\Lambda):
     \sum_{\nu\in\Lambda}\nu^2|c_\nu|^2<\infty\right\}.
\]
Thus $A$ is self-adjoint with exactly the domain displayed above.
Let
\[
   D=\frac{1}{2\pi i}\frac{d}{dx}.
\]
Define the initial derivative operator $S_0$ by
\[
   \Dom(S_0)=\bigoplus_{j=1}^3 C_c^\infty(a_j,b_j),
   \qquad S_0f=Df,
\]
and let $S=\overline{S_0}$, where the bar denotes the operator closure:
equivalently, $\Dom(S_0)$ is completed in the graph norm, rather than merely
in the $L^2$ norm.  The graph norm
\[
   \|f\|_D^2=\|f\|_2^2+\|Df\|_2^2
\]
is equivalent on each component to the $H^1$ norm.  Since
$C_c^\infty(a_j,b_j)$ is dense in $H_0^1(a_j,b_j)$, it follows that
\[
   \Dom(S)=\bigoplus_{j=1}^3 H_0^1(a_j,b_j),
   \qquad Sf=Df,
\]
where the derivative is understood weakly.  Thus $S$ is the minimal closed
derivative, and its domain consists precisely of the functions
$f=(f_1,f_2,f_3)$ such that $f_j\in H^1(a_j,b_j)$ and
$f_j(a_j)=f_j(b_j)=0$ for $j=1,2,3$.

For $f\in\Dom(S)$, integration by parts gives
\[
   \ip{Df}{e_\lambda}=\lambda\ip{f}{e_\lambda}.
\]
Parseval's identity therefore yields
\[
 \sum_{\lambda\in\Lambda}\lambda^2
       |\ip{f}{e_\lambda}|^2=\|Df\|_2^2<\infty,
\]
so $f\in\Dom(A)$ and $Af=Df$.  Hence $S\subset A$.  Since $A=A^*$,
taking adjoints gives $A\subset S^*$.  The adjoint operator $S^*$ is the
maximal derivative operator:
\[
   \Dom(S^*)=\bigoplus_{j=1}^3 H^1([a_j,b_j]),
   \qquad
   S^*f=Df
\]
on each interval, with no boundary conditions imposed.  Consequently, every element of
$\Dom(A)$ belongs to this maximal Sobolev domain and $A$ acts there as $D$.
For $f\in\Dom(S^*)$, define its left- and right-endpoint vectors by
\[
 f(\mathbf a)=
 \begin{pmatrix}f(a_1)\\f(a_2)\\f(a_3)\end{pmatrix},
 \qquad
 f(\mathbf b)=
 \begin{pmatrix}f(b_1)\\f(b_2)\\f(b_3)\end{pmatrix}.
\]
For $f,g\in\Dom(S^*)$, Green's formula is
\begin{equation}\label{eq:green}
\ip{Df}{g}-\ip{f}{Dg}
 =\frac{1}{2\pi i}
 \left(
   \ip{f(\mathbf b)}{g(\mathbf b)}_{\C^3}
   -\ip{f(\mathbf a)}{g(\mathbf a)}_{\C^3}
 \right).
\end{equation}

\subsection{Unitary boundary conditions}

The statement below is the three-interval instance of the general
unitary-boundary parametrization of self-adjoint extensions of the derivative
on a finite union of intervals; see
\cite[Section~3]{dutkay-jorgensen}.  The proof given here is a self-contained
argument adapted to the particular operator $A$ defined from the spectrum
$\Lambda$: it derives the boundary unitary directly from the trace map,
Green's formula, and the maximal isotropic boundary space.  Thus it differs
in presentation and starting point from the general extension-theoretic
treatment in \cite{dutkay-jorgensen}, while also fixing our convention
$f(\mathbf b)=Uf(\mathbf a)$.

\begin{lemma}\label{lem:boundary-unitary}
There is a unique unitary matrix $U\in U(3)$ such that
\begin{equation}\label{eq:domain-U}
\Dom(A)=\left\{f\in\bigoplus_{j=1}^3H^1([a_j,b_j]):
  f(\mathbf b)=Uf(\mathbf a)
\right\}.
\end{equation}
\end{lemma}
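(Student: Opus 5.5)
The plan is to identify $\Dom(A)$, viewed inside $\Dom(S^*)$ modulo $\Dom(S)$, with a Lagrangian subspace of the boundary space $\C^3\oplus\C^3$ for the indefinite form in Green's formula \eqref{eq:green}, and then to show that such a subspace is the graph of a unitary. Throughout we use $S\subset A=A^*\subset S^*$ and the description of $\Dom(S^*)$ and $\Dom(S)$ established above.

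\emph{Step 1 (trace map).} Define $\tau:\Dom(S^*)\to\C^3\oplus\C^3$ by $\tau f=(f(\mathbf a),f(\mathbf b))$. This map is surjective: on each $[a_j,b_j]$, affine functions realize arbitrary endpoint values. Its kernel is exactly $\bigoplus_j H^1_0(a_j,b_j)=\Dom(S)$. Since $\Dom(S)\subset\Dom(A)$, the domain $\Dom(A)$ is determined by its image $V=\tau(\Dom(A))$, namely $\Dom(A)=\tau^{-1}(V)$. Introduce the Hermitian form
\[
 \omega\bigl((x,y),(x',y')\bigr)=\ip{y}{y'}_{\C^3}-\ip{x}{x'}_{\C^3},
\]
so that \eqref{eq:green} reads $\ip{Df}{g}-\ip{f}{Dg}=(2\pi i)^{-1}\omega(\tau f,\tau g)$.

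\emph{Step 2 (isotropy and maximality).} Symmetry of $A$ gives $\omega(\tau f,\tau g)=0$ for all $f,g\in\Dom(A)$, so $V$ is $\omega$-isotropic. In particular $\|y\|=\|x\|$ for every $(x,y)\in V$. Hence the projection $(x,y)\mapsto x$ is injective on $V$, and $V$ is the graph of a linear isometry $U_0$ from a subspace $W\subset\C^3$ into $\C^3$.

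For maximality, take any $(x,y)$ that is $\omega$-orthogonal to $V$, and pick $g\in\Dom(S^*)$ with $\tau g=(x,y)$. By Green's formula, $\ip{Df}{g}=\ip{f}{Dg}$ for all $f\in\Dom(A)$, which means $g\in\Dom(A^*)=\Dom(A)$. Therefore $(x,y)\in V$, so $V^{\perp_\omega}=V$.

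\emph{Step 3 (dimension count; main obstacle).} The key point is to show $W=\C^3$, i.e.\ $\dim V=3$. The form $\omega$ is nondegenerate on $\C^6$ with signature $(3,3)$, so $\dim V^{\perp_\omega}=6-\dim V$. Combined with $V=V^{\perp_\omega}$, this forces $\dim V=3$. An isotropic graph of dimension $3$ over a subspace of $\C^3$ must be over all of $\C^3$, so $U:=U_0$ is an isometry of $\C^3$ onto a $3$-dimensional subspace, hence unitary. This step is where the full self-adjointness (not merely symmetry) of $A$ is used: it enters through $\Dom(A^*)=\Dom(A)$ in Step 2. One must also verify the nondegeneracy of $\omega$ and the formula for the dimension of its orthogonal complement, which are routine linear algebra.

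\emph{Step 4 (domain and uniqueness).} By Step 1, $\Dom(A)=\tau^{-1}(\operatorname{graph}U)$, which is exactly \eqref{eq:domain-U}. For uniqueness, suppose another unitary $U'$ satisfies \eqref{eq:domain-U}. By surjectivity of $\tau$, for every $x\in\C^3$ there is some $f\in\Dom(A)$ with $f(\mathbf a)=x$. Then $Ux=f(\mathbf b)=U'x$, so $U=U'$.
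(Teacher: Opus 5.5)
Your proposal is correct and follows the paper's proof essentially step by step. The shared steps are: the trace map with kernel $\Dom(S)$, isotropy of $V$ from the symmetry of $A$, maximality $V=V^{\perp_\omega}$ from $\Dom(A^*)=\Dom(A)$, the signature-$(3,3)$ dimension count forcing $\dim V=3$, and injectivity of the left-endpoint projection to realize $V$ as the graph of a unitary. The only differences are cosmetic: you set up the graph structure before the dimension count, and you check uniqueness by evaluating on a suitable $f\in\Dom(A)$ rather than noting that the graph determines $U$.
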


\begin{proof}
For $f\in\Dom(S^*)=\bigoplus_{j=1}^3H^1([a_j,b_j])$, define its
boundary trace by
\[
   \operatorname{Tr}f=\bigl(f(\mathbf a),f(\mathbf b)\bigr)
   \in\C^3\oplus\C^3.
\]
The trace map from $\Dom(S^*)$ onto $\C^3\oplus\C^3$ is surjective:
on each interval, an affine function can be chosen with any prescribed
pair of endpoint values.  Moreover,
\[
   \ker(\operatorname{Tr})
   =\Dom(S)
   =\bigoplus_{j=1}^3H_0^1(a_j,b_j);
\]
indeed, its kernel consists exactly of the functions whose values vanish at
all six endpoints.
Set
\[
   \mathcal L=\operatorname{Tr}(\Dom(A))
   \subset\C^3\oplus\C^3.
\]
This is a linear subspace.  If
$\operatorname{Tr}f=(x,y)$ and $\operatorname{Tr}g=(x',y')$ for
$f,g\in\Dom(A)$, then $Af=Df$, $Ag=Dg$, and the symmetry of $A$ gives
\[
   \ip{Df}{g}-\ip{f}{Dg}
   =\ip{Af}{g}-\ip{f}{Ag}=0.
\]
Green's formula \eqref{eq:green} therefore yields
\[
   \ip{y}{y'}_{\C^3}-\ip{x}{x'}_{\C^3}=0.
\]
Thus the Hermitian form
\[
   \omega\bigl((x,y),(x',y')\bigr)
   =
   \ip{y}{y'}_{\C^3}-\ip{x}{x'}_{\C^3}
\]
vanishes on $\mathcal L\times\mathcal L$; this is precisely the statement
that $\mathcal L$ is isotropic.  Define its orthogonal complement with
respect to $\omega$ by
\[
   \mathcal L^{\perp_\omega}
   =
   \left\{z\in\C^3\oplus\C^3:
   \omega(\ell,z)=0\ \text{for every }\ell\in\mathcal L\right\}.
\]
Since $\mathcal L$ is isotropic,
$\mathcal L\subseteq\mathcal L^{\perp_\omega}$.  To prove the reverse
inclusion, let $z\in\mathcal L^{\perp_\omega}$.  By surjectivity of the
trace map, choose $h\in\Dom(S^*)$ such that $\operatorname{Tr}h=z$.
Green's formula then gives
\[
 \ip{Af}{h}=\ip{f}{Dh}\qquad(f\in\Dom(A)),
\]
so the definition of the adjoint gives
$h\in\Dom(A^*)$ and $A^*h=Dh$.  Since $A=A^*$, we have
$h\in\Dom(A^*)=\Dom(A)$, and hence
\[
   \operatorname{Tr}h\in\operatorname{Tr}(\Dom(A))=\mathcal L.
\]
Thus $z\in\mathcal L$, proving
$\mathcal L^{\perp_\omega}\subseteq\mathcal L$.  Consequently,
\[
   \mathcal L=\mathcal L^{\perp_\omega},
\]
so $\mathcal L$ is maximal isotropic.  With respect to the decomposition
$\C^3\oplus\C^3$, the form $\omega$ is represented by
\[
   \begin{pmatrix}
      -I_3&0\\
      0&I_3
   \end{pmatrix}.
\]
This matrix has three negative eigenvalues and three positive eigenvalues,
all nonzero.  Thus $\omega$ has signature $(3,3)$ and is nondegenerate on
the six-dimensional space $\C^3\oplus\C^3$.  Hence
\[
   \dim\mathcal L+\dim\mathcal L^{\perp_\omega}=6.
\]
Since $\mathcal L=\mathcal L^{\perp_\omega}$, it follows that
$2\dim\mathcal L=6$, and therefore $\dim\mathcal L=3$.

Consider the projection onto the first factor,
\[
   \pi_a:\mathcal L\longrightarrow\C^3,
   \qquad \pi_a(x,y)=x.
\]
This projection is injective.  Indeed, if $(0,y)\in\mathcal L$, then
isotropy applied to this vector and itself gives
\[
   0=\omega\bigl((0,y),(0,y)\bigr)=\|y\|_{\C^3}^2,
\]
and hence $y=0$.  Since both $\mathcal L$ and $\C^3$ have dimension three,
the injective linear map $\pi_a$ is bijective.  Consequently, for every
$x\in\C^3$ there is a unique $y\in\C^3$ such that $(x,y)\in\mathcal L$.
Define $Ux=y$.  The linearity of $\mathcal L$ and the uniqueness of $y$
show that $U:\C^3\to\C^3$ is linear, and
\[
   \mathcal L=\{(x,Ux):x\in\C^3\}.
\]

For $x,x'\in\C^3$, the two vectors $(x,Ux)$ and $(x',Ux')$ belong to
$\mathcal L$.  Their isotropy gives
\[
   0=\omega\bigl((x,Ux),(x',Ux')\bigr)
    =\ip{Ux}{Ux'}_{\C^3}-\ip{x}{x'}_{\C^3}.
\]
Thus $U$ preserves inner products.  In particular, $U$ is an isometry and
is injective; since its domain and codomain both have dimension three, it is
surjective and therefore unitary.  The graph $\mathcal L$ also determines
$U$ uniquely, because for each $x$ it contains exactly one vector of the
form $(x,y)$.

It remains to verify the asserted equality of domains.  If
$f\in\Dom(A)$, then by the definition of $\mathcal L$,
\[
   \operatorname{Tr}f\in\mathcal L,
\]
and hence $f(\mathbf b)=Uf(\mathbf a)$.  Conversely, suppose that
$h\in\Dom(S^*)$ satisfies $h(\mathbf b)=Uh(\mathbf a)$.  Then
$\operatorname{Tr}h\in\mathcal L=\operatorname{Tr}(\Dom(A))$, so there
exists $g\in\Dom(A)$ with $\operatorname{Tr}g=\operatorname{Tr}h$.
Therefore
\[
   h-g\in\ker(\operatorname{Tr})=\Dom(S)\subset\Dom(A).
\]
Since also $g\in\Dom(A)$, we conclude that $h\in\Dom(A)$.  This proves the
domain equality in \eqref{eq:domain-U}.
\end{proof}

We call this uniquely determined matrix $U$ the \emph{boundary unitary}
associated with the spectral pair $(\Omega,\Lambda)$.

The two descriptions of $\Dom(A)$ express the same operator from complementary
viewpoints.  The first is its spectral description, determined by the given
basis $E(\Lambda)$, whereas \eqref{eq:domain-U} is its differential
description: functions have $H^1$ regularity on each component and their six
endpoint values satisfy three boundary conditions.  The boundary unitary $U$ records how
the values at the three left endpoints are coupled to those at the three right
endpoints.  Its unitarity is precisely what makes the boundary term in
\eqref{eq:green} vanish, while maximality of the boundary condition gives
self-adjointness rather than mere symmetry.  Thus $U$ may be viewed as a
lossless coupling of the interval endpoints.

For each $\lambda\in\Lambda$, one has
$e_\lambda\in\Dom(A)$ and $Ae_\lambda=\lambda e_\lambda$.  Applying
Lemma~\ref{lem:boundary-unitary} and canceling the common normalization
factor gives the boundary relation
\begin{equation}\label{eq:global-boundary}
  \begin{pmatrix}
  e^{2\pi i\lambda b_1}\\
  e^{2\pi i\lambda b_2}\\
  e^{2\pi i\lambda b_3}
  \end{pmatrix}
  =U
  \begin{pmatrix}
  e^{2\pi i\lambda a_1}\\
  e^{2\pi i\lambda a_2}\\
  e^{2\pi i\lambda a_3}
  \end{pmatrix},
  \qquad \lambda\in\Lambda.
\end{equation}

\subsection{The secular determinant}\label{subsec:secular}

For $\zeta\in\C$, set
\[
   z_j(\zeta)=e^{2\pi i\zeta\ell_j},
   \qquad
   D_\ell(\zeta)=\diag(z_1(\zeta),z_2(\zeta),z_3(\zeta)),
\]
and define the \emph{boundary secular matrix} and its determinant by
\begin{equation}\label{eq:F}
   M(\zeta)=D_\ell(\zeta)-U,
   \qquad
   F(\zeta)=\det M(\zeta).
\end{equation}
The parameter $\zeta$ is allowed to be complex so that $F$ is an entire
function; the spectral points will be identified below with its real zeros.
We refer to $F$ as the \emph{secular determinant}.  As shown below, the
equation $F(\zeta)=0$ is precisely the characteristic equation for the
spectrum of the associated self-adjoint derivative.

The following proposition is the main bridge between the spectral data and
the algebraic analysis of the boundary unitary.  It identifies the spectrum
with the zero set of a single exponential polynomial and supplies the
simplicity and density properties used throughout the rest of the proof.

\begin{proposition}\label{prop:secular}
The secular determinant $F$ has the following properties:
\begin{enumerate}[label=\textup{(\roman*)}]
\item For every $\lambda\in\R$,
\[
   \lambda\in\Lambda\quad\text{if and only if}\quad F(\lambda)=0.
\]
\item All zeros of $F$ are real and simple.
\item For every $\lambda\in\Lambda$, the matrix $M(\lambda)$ has rank two.
Moreover, if
\[
 y_\lambda=(e^{2\pi i\lambda a_1},e^{2\pi i\lambda a_2},
             e^{2\pi i\lambda a_3})^{\mathsf T},
\]
then there exists a nonzero scalar $\kappa(\lambda)$ such that
\[
 \adj{M(\lambda)}
 =\kappa(\lambda)y_\lambda(D_\ell(\lambda)y_\lambda)^*.
\]
\item The spectrum satisfies
\begin{equation}\label{eq:density}
   \#(\Lambda\cap[-R,R])=2LR+O(1).
\end{equation}
\end{enumerate}
\end{proposition}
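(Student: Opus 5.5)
The plan is to read all four assertions off the eigenvalue problem for the self-adjoint operator $A$, using the domain description of Lemma~\ref{lem:boundary-unitary}.

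\textbf{Step 1: eigenfunctions and (i), (ii).} Fix $\zeta\in\C$. Every $f\in\Dom(S^*)$ with $Df=\zeta f$ has the form $f_j(x)=c_je^{2\pi i\zeta(x-a_j)}$ on $[a_j,b_j)$, with $c=(c_1,c_2,c_3)^{\mathsf T}\in\C^3$. Its endpoint vectors are $f(\mathbf a)=c$ and $f(\mathbf b)=D_\ell(\zeta)c$. By Lemma~\ref{lem:boundary-unitary}, such an $f$ lies in $\Dom(A)$ exactly when $D_\ell(\zeta)c=Uc$, that is, when $M(\zeta)c=0$. Hence
\[
\ker(A-\zeta)\;\cong\;\ker M(\zeta)
\]
via $f\mapsto c$. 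Since $A$ is self-adjoint, it has no nonreal eigenvalues, so every zero of $F$ is real. Since $A=\mathcal F^{-1}M_\lambda\mathcal F$ with the diagonal entries $\lambda\in\Lambda$ pairwise distinct, the real eigenvalues of $A$ are exactly the points of $\Lambda$. This proves (i).

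\textbf{Step 2: rank two and the adjugate, (iii).} The diagonal representation also shows that each eigenspace of $A$ is one-dimensional, spanned by $e_\lambda$. So for $\lambda\in\Lambda$ we get $\dim\ker M(\lambda)=1$, i.e.\ $M(\lambda)$ has rank two, and $\ker M(\lambda)$ is spanned by $y_\lambda$, which is \eqref{eq:global-boundary} read as $M(\lambda)y_\lambda=0$.

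For a rank-two $3\times3$ matrix, the adjugate has rank one. The identity $M\,\adj M=0$ forces its columns to be multiples of $y_\lambda$. The identity $\adj M\,M=0$ forces its rows to lie in the left kernel $\{w^{\mathsf T}: w^{\mathsf T}M=0\}$, equivalently $M^*\bar w=0$.

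To identify the left kernel, use that $\lambda$ is real, so $|z_j(\lambda)|=1$ and $D_\ell^*D_\ell=I$. From $D_\ell y_\lambda=Uy_\lambda$ we get $U^*D_\ell y_\lambda=y_\lambda$, and therefore
\[
M^*D_\ell y_\lambda=D_\ell^*D_\ell y_\lambda-U^*D_\ell y_\lambda=y_\lambda-y_\lambda=0 .
\]
So the left kernel is spanned by $(D_\ell y_\lambda)^*$, which gives
\[
\adj{M(\lambda)}=\kappa(\lambda)\,y_\lambda(D_\ell(\lambda)y_\lambda)^* ,
\]
and $\kappa(\lambda)\ne0$ because the adjugate of a rank-two matrix is nonzero.

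\textbf{Step 3: simplicity of the zeros.} Apply Jacobi's formula $F'=\operatorname{tr}(\adj M\,M')$ with $M'(\zeta)=2\pi i\,\diag(\ell_jz_j(\zeta))$. This gives
\[
F'(\lambda)=2\pi i\,\kappa(\lambda)\,(D_\ell y_\lambda)^*\diag(\ell_jz_j)\,y_\lambda
=2\pi i\,\kappa(\lambda)\sum_j\ell_j|z_j|^2|y_{\lambda,j}|^2
=2\pi i\,\kappa(\lambda)L\ne0 .
\]
So every zero is simple, completing (ii).

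\textbf{Step 4: density, (iv).} Expanding $\det(D_\ell(\zeta)-U)$ shows that $F$ is an exponential polynomial with frequencies contained in $[0,L]$. The frequency $L=\ell_1+\ell_2+\ell_3$ arises only from the product $z_1z_2z_3$, with coefficient $1$. The frequency $0$ arises only from $-\det U$, which is nonzero since $U$ is unitary. Both extreme frequencies are attained uniquely because every $\ell_j>0$; intermediate coincidences of frequencies are harmless.

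Lemma~\ref{lem:zero-count} then gives $N_F(R)=2LR+O(1)$. All zeros of $F$ are real and simple by (ii) and coincide with $\Lambda$ by (i), so $N_F(R)=\#(\Lambda\cap[-R,R])$, which is \eqref{eq:density}.

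\textbf{Main obstacle.} The delicate point is Step 1: the identification of $\ker(A-\zeta)$ with $\ker M(\zeta)$ for arbitrary complex $\zeta$. It needs the explicit exponential solutions to lie in $\bigoplus_j H^1([a_j,b_j])$, which is immediate, and it relies on the full domain equality \eqref{eq:domain-U}, not merely the inclusion $\Dom(A)\subset\Dom(S^*)$. Once that is in place, the remaining steps are linear algebra.
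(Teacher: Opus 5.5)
Your proposal is correct and follows the paper's proof essentially step by step. You identify $\ker M(\zeta)$ with the eigenspace of $A$ through the explicit exponential eigenfunctions and the domain equality of Lemma~\ref{lem:boundary-unitary}, obtain the rank-one adjugate from the left-kernel identity $M(\lambda)^*D_\ell(\lambda)y_\lambda=0$, get simplicity from Jacobi's formula $F'(\lambda)=2\pi iL\kappa(\lambda)$, and derive the density from the extreme frequencies $0$ and $L$ via Lemma~\ref{lem:zero-count}; the paper does each of these in the same way.
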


\begin{proof}
\emph{\textup{(i)}, the reality assertion in \textup{(ii)}, and
\textup{(iii)}.}
For any $\mu\in\C$, a solution of $Df=\mu f$ in $\Dom(S^*)$ has the
form
\[
   f(x)=c_j e^{2\pi i\mu x}
   \quad\text{on }[a_j,b_j],
   \qquad c_j\in\C,\quad j=1,2,3.
\]
Indeed, on the $j$th interval the equation is
$\frac{d}{dx}f_j=2\pi i\mu f_j$ in the weak sense.  Hence the weak derivative of
$e^{-2\pi i\mu x}f_j(x)$ is zero, so this function is constant on
$[a_j,b_j]$; denote the constant by $c_j$.
Define
\[
   y=(y_1,y_2,y_3)^{\mathsf T},
   \qquad
   y_j=c_je^{2\pi i\mu a_j}.
\]
Then the left-endpoint vector is
\[
   f(\mathbf a)=y.
\]
Since $b_j=a_j+\ell_j$, the value at the right endpoint of the $j$th
interval is
\[
   f(b_j)
   =c_je^{2\pi i\mu b_j}
   =e^{2\pi i\mu\ell_j}c_je^{2\pi i\mu a_j}
   =z_j(\mu)y_j.
\]
Hence
\[
   f(\mathbf b)=D_\ell(\mu)y.
\]
The boundary condition $f(\mathbf b)=Uf(\mathbf a)$ from
Lemma~\ref{lem:boundary-unitary} is therefore equivalent to
\[
   D_\ell(\mu)y=Uy,
   \qquad\text{or equivalently}\qquad
   M(\mu)y=0.
\]
Conversely, if $0\ne y\in\ker(D_\ell(\mu)-U)$, define
$f_y:\Omega\to\C$ componentwise by
\[
 f_y(x)=y_j e^{2\pi i\mu(x-a_j)}
 \qquad
 (x\in[a_j,b_j],\quad j=1,2,3).
\]
Each component belongs to $H^1([a_j,b_j])$, so
$f_y\in\Dom(S^*)$.
For each $j=1,2,3$,
\[
   f_y(a_j)=y_j,
   \qquad
   f_y(b_j)=y_je^{2\pi i\mu\ell_j}=z_j(\mu)y_j.
\]
Therefore
\[
   f_y(\mathbf a)=y,
   \qquad
   f_y(\mathbf b)=D_\ell(\mu)y.
\]
Since $y\in\ker(D_\ell(\mu)-U)$, we have
$D_\ell(\mu)y=Uy$.  Thus
$f_y(\mathbf b)=Uf_y(\mathbf a)$, so
Lemma~\ref{lem:boundary-unitary} gives $f_y\in\Dom(A)$ and
$Af_y=\mu f_y$.  Hence the map $y\mapsto f_y$ is a linear isomorphism
\[
   \ker M(\mu)\longrightarrow\ker(A-\mu I).
\]
Its inverse is the left-endpoint map $f\mapsto f(\mathbf a)$.
Therefore,
\[
 \ker(D_\ell(\mu)-U)\ne\{0\}
 \quad\Longleftrightarrow\quad
 \ker(A-\mu I)\ne\{0\}.
\]
By definition, $\mu$ is an eigenvalue of $A$ precisely when there exists a
nonzero $f\in\Dom(A)$ such that $Af=\mu f$, or equivalently, when
$\ker(A-\mu I)\ne\{0\}$.
Also, since $M(\mu)$ is a square matrix,
$F(\mu)=\det M(\mu)=0$ precisely when $\ker M(\mu)\ne\{0\}$.
Consequently, for every $\mu\in\C$,
\[
\begin{aligned}
 F(\mu)=0
 &\quad\Longleftrightarrow\quad
 \ker(D_\ell(\mu)-U)\ne\{0\}\\
 &\quad\Longleftrightarrow\quad
 \mu\text{ is an eigenvalue of }A\\
 &\quad\Longleftrightarrow\quad
 \mu\in\Lambda.
\end{aligned}
\]
To verify the last equivalence explicitly, recall that
$A=\mathcal F^{-1}M_\lambda\mathcal F$, where
\[
  (M_\lambda c)_\nu=\nu c_\nu
  \qquad(c=(c_\nu)_{\nu\in\Lambda}\in\Dom(M_\lambda),
          \nu\in\Lambda).
\]
Thus $M_\lambda c=\mu c$ is equivalent to
$(\nu-\mu)c_\nu=0$ for every $\nu\in\Lambda$.  This equation has a nonzero
solution $c$ precisely when $\mu\in\Lambda$; in that case its solution space
is spanned by the coordinate vector supported at $\mu$.  Unitary equivalence
therefore shows that $\mu$ is an eigenvalue of $A$ precisely when
$\mu\in\Lambda$, and that every eigenspace of $A$ is one-dimensional.
The isomorphism
$\ker M(\lambda)\cong\ker(A-\lambda I)$ established above therefore gives
$\dim(\ker M(\lambda))=1$ for every $\lambda\in\Lambda$.  Since
$M(\lambda)$ is a $3\times3$ matrix, the rank--nullity theorem yields
$\operatorname{rank}M(\lambda)=2$, proving \textup{(iii)}.
Moreover, $A$ is self-adjoint, meaning that $A=A^*$.  If $F(\mu)=0$, then
$\mu$ is an eigenvalue of $A$, so there is a nonzero $f\in\Dom(A)$ with
$Af=\mu f$.  Hence
\[
 \mu\lVert f\rVert^2
 =\ip{Af}{f}
 =\ip{f}{Af}
 =\overline{\mu}\lVert f\rVert^2,
\]
where the middle equality uses $A=A^*$.  Since $f\ne0$, we obtain
$\mu=\overline{\mu}$, so every zero of $F$ is real.

\emph{The simplicity assertion in \textup{(ii)}.}
Fix $\lambda\in\Lambda$ and define
\[
   y=(e^{2\pi i\lambda a_1},e^{2\pi i\lambda a_2},e^{2\pi i\lambda a_3})^{\mathsf T}.
\]
Then $M(\lambda)y=0$.  Since $D_\ell(\lambda)y=Uy$ and
$D_\ell(\lambda)$ is unitary,
\[
   M(\lambda)^*D_\ell(\lambda)y
   =(D_\ell(\lambda)^*-U^*)D_\ell(\lambda)y
   =y-U^*Uy=0.
\]
Since $M(\lambda)$ has rank two, its right kernel is one-dimensional.
Because $M(\lambda)y=0$ and $y\ne0$,
\[
 \ker M(\lambda)=\operatorname{span}\{y\}.
\]
Similarly, the preceding calculation and $D_\ell(\lambda)y\ne0$ give
\[
 \ker M(\lambda)^*=\operatorname{span}\{D_\ell(\lambda)y\}.
\]
Also, $\det M(\lambda)=0$, and the adjugate identity from
Section~\ref{sec:notation} gives
\[
 M(\lambda)\adj{M(\lambda)}=0,
 \qquad
 \adj{M(\lambda)}M(\lambda)=0.
\]
The first equality shows that every column of $\adj{M(\lambda)}$ is a
multiple of $y$.  The second shows that every row is a multiple of
$(D_\ell(\lambda)y)^*$.  Hence $\adj{M(\lambda)}$ must have the form
\[
 \adj{M(\lambda)}=\kappa(\lambda)y(D_\ell(\lambda)y)^*.
\]
Finally, rank two means that some $2\times2$ minor of $M(\lambda)$ is
nonzero.  Since these minors are the entries of its adjugate up to signs,
$\adj{M(\lambda)}\ne0$, and therefore $\kappa(\lambda)\ne0$.
We have
\[
   M'(\lambda)=2\pi i\,\diag(\ell_1,\ell_2,\ell_3)D_\ell(\lambda),
\]
where the derivative of a matrix-valued function is understood entrywise:
if $B(\zeta)=(b_{jk}(\zeta))_{j,k}$, then
$B'(\zeta)=(b_{jk}'(\zeta))_{j,k}$.  Consequently,
\begin{equation}\label{eq:transversal}
 (D_\ell(\lambda)y)^*M'(\lambda)y
   =2\pi i\sum_{j=1}^3\ell_j
      |z_j(\lambda)y_j|^2
   =2\pi i(\ell_1+\ell_2+\ell_3)
   =2\pi iL\ne0.
\end{equation}
Here $\lambda\in\Lambda\subset\R$, so
$|z_j(\lambda)|=|y_j|=1$ for $j=1,2,3$.
Applying Jacobi's cofactor formula\footnote{For a differentiable square-matrix
function $B(\zeta)=(b_{jk}(\zeta))$, the formula is
$\displaystyle \frac{d}{d\zeta}\det B(\zeta)
=\sum_{j,k}\operatorname{cof}_{jk}(B(\zeta))b_{jk}'(\zeta)
=\operatorname{tr}\!\bigl(\adj{B(\zeta)}B'(\zeta)\bigr)$.
Because this cofactor form does not use $B(\zeta)^{-1}$, it remains valid
when $B(\zeta)$ is singular.} to $B=M$ at $\zeta=\lambda$ gives
\[
\begin{aligned}
 F'(\lambda)
 &=\operatorname{tr}(\adj{M(\lambda)}\,M'(\lambda))\\
 &=\kappa(\lambda)(D_\ell(\lambda)y)^*M'(\lambda)y\ne0.
\end{aligned}
\]
The second equality uses
$\operatorname{tr}(uv^*B)=v^*Bu$ for column vectors $u,v$.
Hence the zero is simple.

\emph{\textup{(iv)}.}
As an exponential polynomial, $F$ has lowest frequency $0$, with coefficient
$-\det U\ne0$, and highest frequency $L$, with coefficient $1$.  Indeed,
these terms arise by choosing, respectively, all entries from $-U$ and all
three diagonal exponentials; every other determinant term uses a proper,
nonempty subset of the lengths and therefore has frequency strictly between
$0$ and $L$.  Lemma~\ref{lem:zero-count}, together with the fact that all
zeros are real and simple and equal to $\Lambda$, now gives
\eqref{eq:density}.
\end{proof}

Proposition~\ref{prop:secular}\textup{(i)--(iii)} allows us henceforth to
replace questions about the spectrum by identities for the exponential
polynomial $F$, and its rank and simplicity conclusions drive the cofactor
identities in the next section.  The density conclusion in
Proposition~\ref{prop:secular}\textup{(iv)} makes the zero-counting uniqueness
principle applicable to those identities.

\section{Cofactor rigidity}\label{sec:cofactor}

This section contains the central rigidity step of the proof.  The rank-one
adjugate formula first gives cofactor relations at the spectral points.  We
then combine the density of the spectrum with frequency-width uniqueness to
show that these relations are identities on the whole complex plane.

Recall that $U\in U(3)$ is the boundary unitary associated with the spectral
pair $(\Omega,\Lambda)$ by Lemma~\ref{lem:boundary-unitary}.  Retain the
notation $M(\zeta)$ from \eqref{eq:F}, and write $U=(u_{jk})$.
Let $C_j(\zeta)$ denote the $(j,j)$ cofactor of $M(\zeta)$.
For a diagonal cofactor the checkerboard sign is positive; for example,
\[
   C_1(\zeta)
   =\det\begin{pmatrix}
      z_2(\zeta)-u_{22}&-u_{23}\\
      -u_{32}&z_3(\zeta)-u_{33}
   \end{pmatrix}
   =(z_2(\zeta)-u_{22})(z_3(\zeta)-u_{33})-u_{23}u_{32}.
\]
By the notation fixed in Section~\ref{sec:notation},
$C_j(\zeta)=(\adj{M(\zeta)})_{jj}$.
We shall also use the three principal $2\times2$ minors of $U$:
\[
   \Delta_1=u_{22}u_{33}-u_{23}u_{32},\quad
   \Delta_2=u_{11}u_{33}-u_{13}u_{31},\quad
   \Delta_3=u_{11}u_{22}-u_{12}u_{21}.
\]
Equivalently, $\Delta_j=\operatorname{cof}_{jj}(U)$ for
$j\in\{1,2,3\}$.

The next proposition is important because its conclusion holds for every
$\zeta\in\C$, not only for $\lambda\in\Lambda$.  It converts discrete
spectral constraints into identities of entire exponential polynomials,
which can then be used in the algebraic classification of
Section~\ref{sec:classification}.

\begin{proposition}\label{prop:cofactor}
For every $\zeta\in\C$,
\begin{equation}\label{eq:cofactor-identities}
   z_1(\zeta)C_1(\zeta)
   =z_2(\zeta)C_2(\zeta)
   =z_3(\zeta)C_3(\zeta).
\end{equation}
\end{proposition}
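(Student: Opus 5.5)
The plan is to prove \eqref{eq:cofactor-identities} first on $\Lambda$, using the rank-one adjugate of Proposition~\ref{prop:secular}\textup{(iii)}. We then extend it to all of $\C$ with the frequency-width uniqueness principle, Corollary~\ref{cor:density-unique}, which applies with $D=L$ because of the density \eqref{eq:density}.

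\emph{Step 1: the identities on the spectrum.} Fix $\lambda\in\Lambda$ and put $y=y_\lambda$. By Proposition~\ref{prop:secular}\textup{(iii)},
\[
 \adj{M(\lambda)}=\kappa(\lambda)\,y\,(D_\ell(\lambda)y)^*.
\]
The diagonal entries are therefore
\[
 C_j(\lambda)=(\adj{M(\lambda)})_{jj}
 =\kappa(\lambda)\,y_j\,\overline{z_j(\lambda)y_j}
 =\kappa(\lambda)\,\overline{z_j(\lambda)},
\]
since $|y_j|=1$ for real $\lambda$. Since also $|z_j(\lambda)|=1$, multiplying by $z_j(\lambda)$ gives $z_j(\lambda)C_j(\lambda)=\kappa(\lambda)$ for $j=1,2,3$. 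Hence the three products agree at every point of $\Lambda$.

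\emph{Step 2: frequency bookkeeping.} Consider the entire functions
\[
 Q_{jk}(\zeta)=z_j(\zeta)C_j(\zeta)-z_k(\zeta)C_k(\zeta).
\]
Expanding the $2\times2$ determinant, $C_1$ has frequencies contained in $\{0,\ell_2,\ell_3,\ell_2+\ell_3\}$, with coefficient exactly $1$ at $\ell_2+\ell_3$. Hence $z_1C_1$ has frequencies in $\{\ell_1,\ell_1+\ell_2,\ell_1+\ell_3,L\}$, with coefficient $1$ at $L$. Symmetrically, $z_jC_j$ has frequencies in
\[
 \{\ell_j\}\cup\{\ell_j+\ell_k:k\ne j\}\cup\{L\},
\]
again with coefficient $1$ at $L$. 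In $Q_{jk}$ the top terms $e^{2\pi iL\zeta}$ cancel. Every remaining frequency is therefore at least $\min_i\ell_i>0$ and at most the largest two-term sum $\ell_p+\ell_q$, which is strictly less than $L$. The frequency width of $Q_{jk}$, if it is nonzero, is thus at most $L-2\min_i\ell_i<L$.

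\emph{Step 3: uniqueness.} By Step~1, $Q_{jk}$ vanishes on $\Lambda$. By Proposition~\ref{prop:secular}\textup{(iv)}, $\#(\Lambda\cap[-R,R])=2LR+O(1)$. Corollary~\ref{cor:density-unique} with $D=L$ then forces $Q_{jk}\equiv0$ on $\C$, which is \eqref{eq:cofactor-identities}.

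The main point requiring care is Step~2. We must check that the leading coefficient of each $z_jC_j$ is exactly $1$, so that the $L$-frequency cancels and not merely has small coefficient. We must also note that coincidences among frequencies, such as $\ell_1=\ell_2$, can only merge terms. Merging never enlarges the frequency set, so the width bound survives.
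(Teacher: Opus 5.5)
Your proposal is correct and follows the paper's proof. The rank-one adjugate from Proposition~\ref{prop:secular}\textup{(iii)} gives $z_j(\lambda)C_j(\lambda)=\kappa(\lambda)$ on $\Lambda$, and because each pairwise difference has frequency width strictly below $L$, Corollary~\ref{cor:density-unique} with $D=L$ forces it to vanish identically. The only difference is that the paper writes out the explicit four-term expansions \eqref{eq:H12}--\eqref{eq:H23}, which it reuses in Section~\ref{sec:classification}, whereas you bound the frequency support through the exact cancellation of the $e^{2\pi iL\zeta}$ terms, which is all this step needs.
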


\begin{proof}
Fix $\lambda\in\Lambda$.  By Proposition~\ref{prop:secular}\textup{(iii)},
\[
 \adj{M(\lambda)}
 =\kappa(\lambda)y_\lambda(D_\ell(\lambda)y_\lambda)^*
\]
for the vector $y_\lambda$ defined there and some
$\kappa(\lambda)\ne0$.  For each $j\in\{1,2,3\}$,
$|(y_\lambda)_j|=1$, and the corresponding diagonal entry is
\begin{align*}
   C_j(\lambda)
   &=(\adj{M(\lambda)})_{jj}\\
   &=\kappa(\lambda)(y_\lambda)_j
       \overline{z_j(\lambda)(y_\lambda)_j}\\
   &=\kappa(\lambda)\overline{z_j(\lambda)}.
\end{align*}
Since $\lambda$ is real,
\[
   z_j(\lambda)\overline{z_j(\lambda)}=1,
   \qquad
   z_j(\lambda)C_j(\lambda)=\kappa(\lambda)
   \qquad(j=1,2,3).
\]
Thus \eqref{eq:cofactor-identities} holds at every point of $\Lambda$.

It remains to promote these equalities to identities in $\zeta\in\C$.
For brevity in the following expansions, write
$z_j=z_j(\zeta)$ and $C_j=C_j(\zeta)$.  Consider the three pairwise
differences
\[
 z_1C_1-z_2C_2,\qquad z_1C_1-z_3C_3,\qquad z_2C_2-z_3C_3.
\]
Expanding the $2\times2$ determinants that define the $C_j$ expresses each
difference as an exponential polynomial whose frequencies can be read off
explicitly.  The three expansions are
\begin{align}
 z_1C_1-z_2C_2
   &=-u_{22}z_1z_3+\Delta_1z_1
     +u_{11}z_2z_3-\Delta_2z_2,\label{eq:H12}\\
 z_1C_1-z_3C_3
   &=-u_{33}z_1z_2+\Delta_1z_1
     +u_{11}z_2z_3-\Delta_3z_3,\label{eq:H13}\\
 z_2C_2-z_3C_3
   &=-u_{33}z_1z_2+\Delta_2z_2
     +u_{22}z_1z_3-\Delta_3z_3.\label{eq:H23}
\end{align}
Indeed, since $z_j(\zeta)=e^{2\pi i\zeta\ell_j}$, the four terms on the
right-hand side of \eqref{eq:H12}, in the order displayed, have frequencies
\[
   \ell_1+\ell_3,\quad \ell_1,\quad
   \ell_2+\ell_3,\quad \ell_2.
\]
Some of these four numbers may coincide.  In that case the terms with the
same frequency are combined, and their combined coefficient may even vanish.
Consequently, if $z_1C_1-z_2C_2$ is not identically zero, then every
frequency $\alpha$ occurring with a nonzero coefficient in its reduced
expansion must satisfy
\[
 \alpha\in
 \{\ell_1+\ell_3,\ell_1,\ell_2+\ell_3,\ell_2\}.
\]
Writing $m_{12}=\min(\ell_1,\ell_2)$, all four numbers lie in
$[m_{12},L-m_{12}]$.  Therefore, if $z_1C_1-z_2C_2$ is not identically
zero, its frequency width is at most
\[
 L-2m_{12}<L.
\]
Similarly, if $z_1C_1-z_3C_3$ or $z_2C_2-z_3C_3$ is not identically zero,
its frequency width is at most $L-2\min(\ell_1,\ell_3)$ or
$L-2\min(\ell_2,\ell_3)$, respectively.  All three bounds are strictly
smaller than $L$.  Each of these three exponential polynomials vanishes on
$\Lambda$.  By Proposition~\ref{prop:secular}\textup{(iv)}, the density
parameter of $\Lambda$ in Corollary~\ref{cor:density-unique} is $D=L$.
Thus, if any one of the three polynomials were nonzero, that corollary would
give a contradiction because its frequency width is strictly smaller than
$D$.  Hence all three vanish identically, and
\eqref{eq:cofactor-identities} holds for every $\zeta\in\C$.
\end{proof}

Thus the infinitely many constraints indexed by the spectrum have been
compressed into global exponential-polynomial identities.  The remaining
problem is finite-dimensional and algebraic.

\section{Algebraic classification of the boundary unitary}\label{sec:classification}

By Proposition~\ref{prop:cofactor}, the pairwise differences on the
left-hand sides of \eqref{eq:H12}--\eqref{eq:H23} vanish identically.
Consequently, each right-hand side is an identity of the form
\[
   \sum_r c_r e^{2\pi i\alpha_r\zeta}=0
   \qquad(\zeta\in\C),
\]
where the real numbers $\alpha_r$ are the frequencies.  The following lemma
treats the case in which no two frequencies in any one of the three
identities coincide.  A unitary matrix is called \emph{monomial} if every
row and every column has exactly one nonzero entry.

\begin{lemma}\label{lem:monomial-from-minors}
Suppose $U\in U(3)$ and that the right-hand sides of
\eqref{eq:H12}--\eqref{eq:H23} vanish identically.  For each of the three
choices
\[
   (i,j,k)=(1,2,3),\quad (1,3,2),\quad (2,3,1),
\]
assume that the four numbers
\[
   \ell_i+\ell_k,\qquad \ell_i,\qquad
   \ell_j+\ell_k,\qquad \ell_j
\]
are pairwise distinct.
Then $U$ is monomial.
\end{lemma}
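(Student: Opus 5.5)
The plan is to read off coefficients, using the linear independence of exponentials with distinct real frequencies recorded in Section~\ref{sec:zero-count}. Under the hypothesis, the four frequencies in each of \eqref{eq:H12}--\eqref{eq:H23} are pairwise distinct. So no terms combine, and each identity forces all four displayed coefficients to vanish.

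From \eqref{eq:H12} this gives $u_{22}=u_{11}=0$ and $\Delta_1=\Delta_2=0$. From \eqref{eq:H13} it gives $u_{33}=u_{11}=0$ and $\Delta_1=\Delta_3=0$. Identity \eqref{eq:H23} gives the same information again: $u_{33}=u_{22}=0$ and $\Delta_2=\Delta_3=0$. Thus $U$ has zero diagonal and all three principal $2\times2$ minors vanish. Since the diagonal is zero, the minors reduce to
\[
 \Delta_1=-u_{23}u_{32},\qquad \Delta_2=-u_{13}u_{31},\qquad \Delta_3=-u_{12}u_{21}.
\]
Hence $u_{jk}u_{kj}=0$ for every pair $j\ne k$.

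Next I would use unitarity to get the monomial pattern. Suppose some row, say row $j$, had both off-diagonal entries $u_{jk}$ and $u_{jm}$ nonzero, where $\{j,k,m\}=\{1,2,3\}$. The relations above then give $u_{kj}=u_{mj}=0$. Together with $u_{jj}=0$, column $j$ of $U$ would be zero, which contradicts the invertibility of $U$.

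So each row has at most one nonzero entry. Each row is nonzero, again because $U$ is invertible, so each row has exactly one nonzero entry. If two rows had their nonzero entries in the same column, some column would be zero, which is impossible. Therefore the nonzero entries occupy a permutation pattern, so every column also contains exactly one nonzero entry, and $U$ is monomial.

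I do not expect a real obstacle. The only point needing care is the bookkeeping: distinctness of the four frequencies within each single identity is exactly what is needed to apply linear independence, and this is precisely what the hypothesis supplies for all three triples $(i,j,k)$. The argument also yields the extra information that the monomial $U$ has zero diagonal, i.e.\ its permutation is a derangement (a 3-cycle), which may be useful later.
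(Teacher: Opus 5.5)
Your proposal is correct and follows the paper's proof essentially step for step: you extract coefficients from \eqref{eq:H12}--\eqref{eq:H23} by linear independence, reduce the principal minors to $u_{jk}u_{kj}=0$, and observe that a row with two nonzero entries would force the matching column to vanish. The only differences are cosmetic: you invoke invertibility of $U$ where the paper uses the unit norm of rows and columns, and your closing remark that the underlying permutation is a $3$-cycle is a valid extra consequence under these hypotheses.
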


\begin{proof}
For the three choices of $(i,j,k)$ in the statement, the four displayed
numbers are, respectively, the frequencies of the four terms in
\eqref{eq:H12}, \eqref{eq:H13}, and \eqref{eq:H23}.  By the
linear-independence observation in Section~\ref{sec:zero-count},
\eqref{eq:H12} gives
\[
   u_{22}=\Delta_1=u_{11}=\Delta_2=0,
\]
and \eqref{eq:H13} gives
\[
   u_{33}=\Delta_1=u_{11}=\Delta_3=0.
\]
Similarly, \eqref{eq:H23} gives
\[
   u_{33}=\Delta_2=u_{22}=\Delta_3=0.
\]
Thus
\[
   u_{11}=u_{22}=u_{33}=0,
   \qquad
   \Delta_1=\Delta_2=\Delta_3=0.
\]
Because the diagonal entries vanish, the three principal minors reduce to
\[
   \Delta_1=-u_{23}u_{32},\qquad
   \Delta_2=-u_{13}u_{31},\qquad
   \Delta_3=-u_{12}u_{21}.
\]
Their vanishing therefore gives
\[
   u_{12}u_{21}=u_{13}u_{31}=u_{23}u_{32}=0.
\]

Fix $r\in\{1,2,3\}$ and suppose, for contradiction, that row $r$ contains
two nonzero entries.  Since $u_{rr}=0$, they must be $u_{rs}$ and $u_{rt}$,
where $\{r,s,t\}=\{1,2,3\}$.  The product relations above then imply
\[
   u_{sr}=0\qquad\text{and}\qquad u_{tr}=0.
\]
Together with $u_{rr}=0$, this says that every entry in column $r$ is zero,
contradicting the fact that a column of a unitary matrix has norm one.
Hence every row contains at most one nonzero entry.  On the other hand,
every row has norm one, so every row contains exactly one nonzero entry.

There are therefore exactly three nonzero entries in the whole matrix.
Every column of a unitary matrix also has norm one and hence contains at
least one nonzero entry.  Since the three nonzero entries are distributed
among three columns, each column contains exactly one.  Thus $U$ is
monomial.
\end{proof}

The preceding lemma settles the case in which none of the relevant
frequencies coincide.  The next lemma identifies the length relations that
can produce a frequency collision and hence require separate treatment.

\begin{lemma}\label{lem:frequency-collisions}
Let $i,j,k$ be distinct elements of $\{1,2,3\}$.  The four numbers
\[
   \ell_i+\ell_k,\qquad \ell_i,\qquad
   \ell_j+\ell_k,\qquad \ell_j,
\]
fail to be pairwise distinct if and only if at least one of the following
relations holds:
\[
   \ell_i=\ell_j,\qquad
   \ell_i=\ell_j+\ell_k,
   \qquad
   \ell_j=\ell_i+\ell_k.
\]
\end{lemma}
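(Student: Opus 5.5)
The plan is to check all $\binom{4}{2}=6$ pairs among the four numbers directly, using only that every $\ell$ is strictly positive. I label the numbers
\[
 p_1=\ell_i+\ell_k,\quad p_2=\ell_i,\quad p_3=\ell_j+\ell_k,\quad p_4=\ell_j,
\]
and for each pair I solve the equation $p_r=p_s$ as a linear relation among the lengths.

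\emph{The two impossible pairs.} The equalities $p_1=p_2$ and $p_3=p_4$ would both force $\ell_k=0$. This contradicts $\ell_k>0$, so these two pairs are always distinct.

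\emph{The four remaining pairs.}
\begin{itemize}[label=--]
\item $p_1=p_3$ means $\ell_i+\ell_k=\ell_j+\ell_k$, which is equivalent to $\ell_i=\ell_j$.
\item $p_2=p_4$ is literally $\ell_i=\ell_j$.
\item $p_1=p_4$ is literally $\ell_j=\ell_i+\ell_k$.
\item $p_2=p_3$ is literally $\ell_i=\ell_j+\ell_k$.
\end{itemize}

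This gives both directions of the statement. If some pair coincides, it must be one of these four, since the first two were excluded, and each of the four yields one of the three listed relations. Conversely, each listed relation produces a coincidence: $\ell_i=\ell_j$ gives $p_2=p_4$, $\ell_i=\ell_j+\ell_k$ gives $p_2=p_3$, and $\ell_j=\ell_i+\ell_k$ gives $p_1=p_4$.

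No step here is a real obstacle. The only point needing care is to use positivity of $\ell_k$ to rule out $p_1=p_2$ and $p_3=p_4$, and to state the converse explicitly so that the "if and only if" is fully justified.
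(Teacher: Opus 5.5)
Your proposal is correct and follows essentially the same route as the paper: a direct case check of the six pairwise equalities, using positivity of $\ell_k$ to rule out $\ell_i+\ell_k=\ell_i$ and $\ell_j+\ell_k=\ell_j$, and reading off the three relations from the remaining pairs. Your version is slightly more explicit than the paper's. It lists the coincidence $\ell_i=\ell_j$ between the second and fourth numbers separately, and it spells out the converse.
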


\begin{proof}
The equality
$\ell_i+\ell_k=\ell_j+\ell_k$ is equivalent to
$\ell_i=\ell_j$.  Because all three lengths are positive, neither
$\ell_i+\ell_k=\ell_i$ nor
$\ell_j+\ell_k=\ell_j$ is possible.  The only remaining equalities are
$\ell_i+\ell_k=\ell_j$ and
$\ell_j+\ell_k=\ell_i$, which are precisely the two displayed additive
relations.  The converse is immediate.
\end{proof}

We can now combine these two elementary observations with the global
cofactor identities from Proposition~\ref{prop:cofactor}.  The result is a
complete list of the boundary unitaries that must be considered in the
tiling argument.

\begin{proposition}\label{prop:classification}
After a simultaneous permutation of the interval labels and conjugation of
the boundary unitary $U$ by the corresponding permutation matrix, exactly
one of the following holds:
\begin{enumerate}[label=\textup{(\roman*)}]
\item $U$ is monomial;
\item $U$ is non-monomial and $\ell_1=\ell_2=\ell_3$;
\item $U$ is non-monomial, $\ell_3=\ell_1+\ell_2$, and
\begin{equation}\label{eq:special-U}
 U=\begin{pmatrix}
 0&\alpha&0\\
 c&0&d\\
 e&0&\alpha c
 \end{pmatrix},
 \qquad |\alpha|=1,
 \qquad
 \begin{pmatrix}c&d\\e&\alpha c\end{pmatrix}\in U(2).
\end{equation}
\end{enumerate}
\end{proposition}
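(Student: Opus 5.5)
The plan is to read off, from the vanishing of the right-hand sides of \eqref{eq:H12}--\eqref{eq:H23}, the coefficient of each distinct frequency, using the linear independence of exponentials from Section~\ref{sec:zero-count}, and then to split into cases according to Lemma~\ref{lem:frequency-collisions}.

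\emph{Reduction to three cases.} If none of the three quadruples has a collision, Lemma~\ref{lem:monomial-from-minors} gives~(i). If $\ell_1=\ell_2=\ell_3$ and $U$ is not monomial, we are in~(ii). Otherwise some collision occurs but the lengths are not all equal. After a simultaneous relabeling, conjugating $U$ by the permutation matrix (which permutes the identities \eqref{eq:H12}--\eqref{eq:H23} among themselves), one of two things holds. Either exactly two lengths agree, say $\ell_1=\ell_2\ne\ell_3$, with no additive relation. Or some additive relation holds, which we normalize to $\ell_3=\ell_1+\ell_2$; here $\ell_1=\ell_2$ is allowed.

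\emph{Two equal lengths, no additive relation.} Take $\ell_1=\ell_2\ne\ell_3$ and $\ell_3\ne2\ell_1$. Then \eqref{eq:H12} only yields $u_{11}=u_{22}$ and $\Delta_1=\Delta_2$. However, the quadruples for \eqref{eq:H13} and \eqref{eq:H23} are collision-free, by Lemma~\ref{lem:frequency-collisions}. They give
\[
u_{11}=u_{22}=u_{33}=0,\qquad \Delta_1=\Delta_2=\Delta_3=0 .
\]
The second half of the proof of Lemma~\ref{lem:monomial-from-minors} then applies verbatim, and $U$ is monomial.

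\emph{Additive relation $\ell_3=\ell_1+\ell_2$.} I expect this case to be the main work. First suppose $\ell_1\ne\ell_2$. In \eqref{eq:H12} the frequencies $\ell_1+\ell_3,\ \ell_1,\ \ell_2+\ell_3,\ \ell_2$ are distinct, since $\ell_1=\ell_2+\ell_3$ and $\ell_2=\ell_1+\ell_3$ are both impossible. This gives
\[
u_{11}=u_{22}=\Delta_1=\Delta_2=0 .
\]
In \eqref{eq:H13} and \eqref{eq:H23} the only collision is between the $z_1z_2$ and $z_3$ terms, which forces $u_{33}=-\Delta_3$.

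If instead $\ell_1=\ell_2=\ell$ and $\ell_3=2\ell$, I will check the frequencies directly. In \eqref{eq:H12} they are $3\ell,\ell,3\ell,\ell$. In \eqref{eq:H13} and \eqref{eq:H23} they are $2\ell,\ell,3\ell,2\ell$. These give exactly the same constraints, so both subcases merge.

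Since $u_{11}=u_{22}=0$, we have
\[
\Delta_1=-u_{23}u_{32},\qquad \Delta_2=-u_{13}u_{31},\qquad \Delta_3=-u_{12}u_{21}.
\]
The constraints therefore become
\[
u_{23}u_{32}=0,\qquad u_{13}u_{31}=0,\qquad u_{33}=u_{12}u_{21}.
\]
If $u_{12}u_{21}=0$, then the whole diagonal vanishes together with all three products, and the monomial argument applies. So in the non-monomial case $u_{12},u_{21},u_{33}\ne0$.

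Next I use orthogonality, which is where $u_{11}=u_{22}=0$ helps. Columns $1$ and $2$ of $U$ are orthogonal, which gives $u_{31}\bar u_{32}=0$. Rows $1$ and $2$ are orthogonal, which gives $u_{13}\bar u_{23}=0$. Combining these with the two product relations leaves four patterns of vanishing entries:
\begin{itemize}[label=--]
\item If $u_{31}=u_{32}=0$, then row~$3$ is $(0,0,u_{33})$. Column~$3$ is then forced to be $(0,0,u_{33})^{\mathsf T}$, and $U$ is monomial.
\item If $u_{13}=u_{23}=0$, the symmetric argument again makes $U$ monomial.
\item If $u_{13}=u_{32}=0$, then row~$1$ is $(0,\alpha,0)$ with $|\alpha|=1$, and column~$2$ is $(\alpha,0,0)^{\mathsf T}$. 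Writing $c=u_{21}$, $d=u_{23}$, $e=u_{31}$, we have $u_{33}=\alpha c$. Unitarity of $U$ is then equivalent to unitarity of the displayed $2\times2$ block, which is exactly \eqref{eq:special-U}.
\item If $u_{31}=u_{23}=0$, swapping labels $1\leftrightarrow2$ preserves $\ell_3=\ell_1+\ell_2$ and converts this pattern into the previous one.
\end{itemize}

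\emph{Exclusivity.} Alternatives (ii) and (iii) are defined to be non-monomial, so each excludes~(i). Alternatives (ii) and (iii) exclude each other, because $\ell_1=\ell_2=\ell_3$ is incompatible with $\ell_3=\ell_1+\ell_2$ for positive lengths. Hence exactly one of the three alternatives holds.
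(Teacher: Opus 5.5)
Your proof is correct and follows essentially the same route as the paper: the same collision case split via Lemma~\ref{lem:frequency-collisions}, the same coefficient extraction from \eqref{eq:H12}--\eqref{eq:H23}, and the same reduction in the additive case to $u_{11}=u_{22}=0$, $u_{23}u_{32}=u_{13}u_{31}=0$, $u_{33}=u_{12}u_{21}$. The only difference is cosmetic. You finish with orthogonality of the first two rows and first two columns, whereas the paper compares row and column norms to get $|u_{13}|=|u_{32}|$ and $|u_{23}|=|u_{31}|$; both reach the same dichotomy (and the case of three equal lengths with monomial $U$, which your ``otherwise'' skips, is trivially alternative~(i)).
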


\begin{proof}
Relabeling the intervals conjugates $U$ by the corresponding permutation
matrix and simultaneously permutes the lengths.

If $U$ is monomial, alternative \textup{(i)} holds, so assume from now on
that $U$ is non-monomial.  By the contrapositive of
Lemma~\ref{lem:monomial-from-minors}, at least one of its three four-tuples
fails to be pairwise distinct.  Lemma~\ref{lem:frequency-collisions} then
shows that either two interval lengths are equal or one interval length is
the sum of the other two.

If $\ell_1=\ell_2=\ell_3$, we obtain alternative \textup{(ii)}.  Suppose
next that the lengths are not all equal and that no one of them is the sum
of the other two.  The preceding reduction then forces exactly two lengths
to be equal.  Relabel so that
\[
   \ell_1=\ell_2=s,
   \qquad \ell_3=t\ne s.
\]
Equation \eqref{eq:H12} becomes, for every $\lambda\in\R$,
\[
 (u_{11}-u_{22})e^{2\pi i(s+t)\lambda}
 + (\Delta_1-\Delta_2)e^{2\pi is\lambda}=0.
\]
Since $t>0$, the frequencies $s+t$ and $s$ are distinct.  The
linear-independence observation in Section~\ref{sec:zero-count} therefore
gives
\[
   u_{11}=u_{22},
   \qquad \Delta_1=\Delta_2.
\]
Because the additive case is excluded,
\[
   \ell_3\ne\ell_1+\ell_2,
   \qquad\text{that is,}\qquad t\ne2s.
\]
Positivity, $s\ne t$, and $t\ne2s$ show that the four frequencies
\[
   2s,\quad s,\quad s+t,\quad t
\]
in \eqref{eq:H13} are distinct.  More explicitly, the only equalities not
immediately ruled out by positivity reduce to $2s=s+t$, $2s=t$, or $s=t$,
which would give $s=t$, $t=2s$, or $s=t$, respectively.  Hence
\[
   u_{33}=\Delta_1=u_{11}=\Delta_3=0,
\]
and then also $u_{22}=\Delta_2=0$.  These are the same vanishing conditions
used in the algebraic part of the proof of
Lemma~\ref{lem:monomial-from-minors}.  That argument shows that $U$ is
monomial, contrary to our standing assumption.  Hence the non-monomial case
cannot occur without an additive relation.

It remains to consider an additive relation.  Relabel so that
\[
   \ell_3=\ell_1+\ell_2.
\]
If $\ell_1\ne\ell_2$, the four frequencies in \eqref{eq:H12} are
\[
 2\ell_1+\ell_2,\qquad \ell_1,\qquad
 \ell_1+2\ell_2,\qquad \ell_2.
\]
They are pairwise distinct: equality of the first and third would give
$\ell_1=\ell_2$, while any other possible equality is excluded by
positivity or again reduces to $\ell_1=\ell_2$.  Linear independence in
\eqref{eq:H12} therefore gives
\[
   u_{11}=u_{22}=\Delta_1=\Delta_2=0.
\]
Since
$z_3(\lambda)=z_1(\lambda)z_2(\lambda)$ for every $\lambda\in\R$,
equation \eqref{eq:H13} then reduces, for every $\lambda\in\R$, to
\[
 -(u_{33}+\Delta_3)z_3(\lambda)=0,
\]
and $z_3(\lambda)$ never vanishes.  Hence $u_{33}+\Delta_3=0$.

If $\ell_1=\ell_2=s$, put
$z(\lambda)=e^{2\pi is\lambda}$ for $\lambda\in\R$.  Then
$z_1(\lambda)=z_2(\lambda)=z(\lambda)$ and
$z_3(\lambda)=z(\lambda)^2$.  For every $\lambda\in\R$, equations
\eqref{eq:H13} and \eqref{eq:H12} become, respectively,
\[
 \Delta_1z(\lambda)-(u_{33}+\Delta_3)z(\lambda)^2
 +u_{11}z(\lambda)^3=0
\]
and
\[
 (\Delta_1-\Delta_2)z(\lambda)
 +(u_{11}-u_{22})z(\lambda)^3=0.
\]
The functions $z(\lambda)$, $z(\lambda)^2$, and $z(\lambda)^3$ have the
distinct frequencies $s$, $2s$, and $3s$, because $s>0$.  Linear
independence therefore gives
\[
 \Delta_1=u_{11}=0,\qquad u_{33}+\Delta_3=0,
 \qquad \Delta_2=u_{22}=0.
\]
Thus, under $\ell_3=\ell_1+\ell_2$, both subcases
$\ell_1=\ell_2$ and $\ell_1\ne\ell_2$ give
\[
 u_{11}=u_{22}=\Delta_1=\Delta_2=0,
 \qquad u_{33}+\Delta_3=0.
\]
Since $u_{11}=u_{22}=0$, the definitions of the principal minors become
\[
 \Delta_1=-u_{23}u_{32},\qquad
 \Delta_2=-u_{13}u_{31},\qquad
 \Delta_3=-u_{12}u_{21}.
\]
Therefore these two subcases both satisfy
\begin{equation}\label{eq:additive-constraints}
 u_{11}=u_{22}=0,
 \qquad
 u_{23}u_{32}=0,
 \qquad
 u_{13}u_{31}=0,
 \qquad
 u_{33}=u_{12}u_{21}.
\end{equation}

Write
\[
 U=\begin{pmatrix}
 0&a&b\\
 c&0&d\\
 e&f&ac
 \end{pmatrix}.
\]
Comparing the norm of row $1$ with that of column $2$, and the norm of
row $2$ with that of column $1$, gives
\[
   |a|^2+|b|^2=|a|^2+|f|^2,
   \qquad
   |c|^2+|d|^2=|c|^2+|e|^2,
\]
and hence
\[
   |b|=|f|,
   \qquad |d|=|e|.
\]
The two product conditions in \eqref{eq:additive-constraints} are $be=0$
and $df=0$.  If $b=0$, then $f=0$.  If $b\ne0$, then $f\ne0$, and the
two product conditions force $e=d=0$.  Thus either $b=f=0$, or $d=e=0$.

Consider first the case $b=f=0$.  Then the first row of $U$ is
$(0,a,0)$.  Since every row of a unitary matrix has norm one, $|a|=1$.
After deleting row $1$ and column $2$, which contain the entry $a$, the
remaining submatrix is
\[
 \begin{pmatrix}c&d\\e&ac\end{pmatrix}\in U(2).
\]
The orthonormality of the remaining two rows and columns is therefore
exactly the unitarity of the displayed $2\times2$ submatrix.
This is \eqref{eq:special-U} with $\alpha=a$.  In the second case
$d=e=0$, so the norm of row $2$ gives $|c|=1$.  Interchanging labels $1$
and $2$ preserves the relation $\ell_3=\ell_1+\ell_2$ and conjugates $U$ to
\[
 \begin{pmatrix}
 0&c&0\\
 a&0&b\\
 f&0&ca
 \end{pmatrix},
\]
and the remaining block $\left(\begin{smallmatrix}a&b\\f&ca\end{smallmatrix}\right)$
is unitary by the same row-and-column argument.  This is again
\eqref{eq:special-U}, now with $\alpha=c$.

Consequently, whether $\ell_1=\ell_2$ or $\ell_1\ne\ell_2$, a possible
interchange of labels $1$ and $2$ puts $U$ in the form
\eqref{eq:special-U}.

The three alternatives are mutually exclusive: alternatives \textup{(ii)}
and \textup{(iii)} explicitly require $U$ to be non-monomial, and positive
equal lengths cannot satisfy
$\ell_3=\ell_1+\ell_2$.  The preceding cases also show that one of the
three alternatives always occurs.  This proves the classification.
\end{proof}

\section{A fiber lemma in multiplicities two and three}\label{sec:fiber}

In the two non-monomial cases of Proposition~\ref{prop:classification}, the
spectrum will be a union of, respectively, three or two cosets of a common
lattice, while the set will be a lattice multitile of the same
multiplicity.  The following lemma supplies the common final step: in these
two low multiplicities, the spectral structure of the fibers upgrades the
multitiling to an ordinary translational tiling.  The lemma is proved only for
$k=2$ and $k=3$; whether the same conclusion holds under these hypotheses for
$k>3$ is unknown.

For $h>0$ and a bounded measurable set $E\subset\R$, define the
\emph{$h\Z$-fiber} of $E$ by
\[
   A_x=\{n\in\Z:x+nh\in E\},
   \qquad 0\le x<h.
\]
\begin{lemma}\label{lem:fiber}
Let $k\in\{2,3\}$, let $h>0$, and let $E\subset\R$ be a bounded
measurable set.  Suppose that $E$ is a $k$-fold lattice multitile by $h\Z$:
\begin{equation}\label{eq:multitile}
   \sum_{n\in\Z}\1_E(x+nh)=k
   \qquad\text{for almost every }x.
\end{equation}
Suppose also that
\[
   \Lambda=\Gamma+h^{-1}\Z,
   \qquad |\Gamma|=k,
\]
is a spectrum of $E$, where the elements of $\Gamma$ are chosen to be distinct modulo $h^{-1}\Z$.  Then $E$ tiles $\R$ by translations.
\end{lemma}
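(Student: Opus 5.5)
The plan is to reduce everything to the $h\Z$-fibers $A_x$ and to use the coset structure of $\Lambda$ to periodize. The reduction itself is a Fourier-series computation, and the final step is an arithmetic tiling of $\Z$ by the fibers with a translation set independent of $x$.

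\emph{Step 1: reduction to fiberwise orthogonality.} For $f\in L^2(E)$, $\gamma\in\Gamma$ and $m\in\Z$, periodize over $h\Z$:
\[
 \int_E f(y)e^{-2\pi i(\gamma+m/h)y}\,dy
 =\int_0^h e^{-2\pi i m x/h}\sum_{n\in A_x} f(x+nh)e^{-2\pi i\gamma(x+nh)}\,dx .
\]
By \eqref{eq:multitile}, $\#A_x=k$ for a.e.\ $x$. For each such $x$, form the $k\times k$ matrix
\[
 H_x=\bigl(e^{2\pi i\gamma(x+nh)}\bigr)_{\gamma\in\Gamma,\ n\in A_x}.
\]
Completeness of $\{e^{2\pi i m x/h}\}$ in $L^2[0,h)$, applied to the displayed identity, should show that $E(\Lambda)$ is an orthonormal basis of $L^2(E)$ exactly when $H_xH_x^*=kI$ for a.e.\ $x$. (The map $f\mapsto(f(x+nh))_{n\in A_x}$ identifies $L^2(E)$ with $L^2([0,h);\C^k)$, so no generality is lost.) Reading off the off-diagonal entries, for every pair $\gamma\ne\gamma'$ in $\Gamma$ and a.e.\ $x$ we get
\[
 \sum_{n\in A_x} e^{2\pi i(\gamma-\gamma')nh}=0 .
\]
The factor $e^{2\pi i(\gamma-\gamma')x}$ drops out of this sum.

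\emph{Step 2: arithmetic of a single difference.} Fix one pair $\gamma\ne\gamma'$ and put $\theta=(\gamma-\gamma')h$. Since the elements of $\Gamma$ are distinct modulo $h^{-1}\Z$, $\theta\notin\Z$. Translate each fiber so that $0\in A_x$.
\begin{itemize}
 \item For $k=2$, write $A_x=\{0,d\}$. The relation becomes $e^{2\pi i\theta d}=-1$, so $2\theta d$ is an odd integer.
 \item For $k=3$, write $A_x=\{0,d_1,d_2\}$. Three unimodular numbers summing to zero form a rotated copy of the cube roots of unity. Hence $\{e^{2\pi i\theta d_1},e^{2\pi i\theta d_2}\}=\{e^{2\pi i/3},e^{4\pi i/3}\}$.
\end{itemize}
In either case $\theta$ is rational. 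Write $\theta=p/q$ in lowest terms. Then $k\mid q$; put $r=q/k$. Because $\gcd(p,q)=1$, every difference $d$ occurring in any fiber is a multiple of $r$. Moreover, the elements of $A_x$ are pairwise incongruent modulo $kr$, since two of them congruent mod $kr=q$ would give equal roots of unity, which is impossible. Thus $A_x$ is a complete residue system for $r\Z/kr\Z$ inside $r\Z$ (after the translation to $0$).

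\emph{Step 3: a uniform tiling.} Set
\[
 T_0=\{0,1,\dots,r-1\}+kr\Z .
\]
By Step 2, for each admissible $x$ the sum $A_x+T_0$ is direct and equals $\Z$. The translation normalizing $A_x$ to contain $0$ does not affect this, because tiling of $\Z$ is translation invariant. Take $T=hT_0$. For a.e.\ $y\in\R$, write $y=x+Nh$ with $x\in[0,h)$. Then
\[
 \sum_{t\in T}\1_E(y-t)=\#\{(n,s)\in A_x\times T_0: n+s=N\}=1 ,
\]
so $E$ tiles $\R$ by $T$.

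The step I expect to be the main obstacle is making Step 2 uniform: one must check that a single $\theta$ gives the same $r$ for all fibers, so that $T_0$ does not depend on $x$. The argument above handles this, since $r$ depends only on $\theta$; the remaining care is the a.e.\ measure-theoretic justification of the fiber equivalence in Step 1.
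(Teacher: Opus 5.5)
Your proposal is correct and follows essentially the same route as the paper. Both arguments use fiberwise orthogonality obtained from Fourier uniqueness on $[0,h)$, then write $\theta=p/q$ in lowest terms with $k\mid q$, show each fiber is a coset of the order-$k$ subgroup of $\Z/q\Z$, and tile with the common complement $\{0,\dots,q/k-1\}+q\Z$, which is exactly the paper's $B_k$. The only differences are minor: you get incongruence modulo $q$ from distinctness of the roots of unity, whereas the paper multiplies by $p^{-1}$ modulo $Q$, and you only need the orthogonality half of the ``exactly when'' claim in your Step 1.
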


\begin{proof}
For every $x\in[0,h)$, the sum in \eqref{eq:multitile} counts the integers
$n$ for which $x+nh\in E$, and hence equals $|A_x|$.  Thus there is a null
set $N\subset[0,h)$ such that $|A_x|=k$ for every
$x\in X:=[0,h)\setminus N$.  We first
show that, after passing to one common full-measure subset, all fiber
orthogonality relations hold simultaneously.  Fix
$\gamma,\gamma'\in\Gamma$ and put
\[
   \delta=\gamma-\gamma',
   \qquad
   S_{\gamma,\gamma'}(x)
   =\sum_{n\in A_x}e^{2\pi i h\delta n}.
\]
If $\gamma\ne\gamma'$, then $\gamma+q/h$ and $\gamma'$ are distinct
elements of $\Lambda$ for every $q\in\Z$, so the corresponding exponentials
are orthogonal in $L^2(E)$.  Hence
\[
 0
 =\int_E e^{2\pi i(\delta+q/h)t}\,dt.
\]
Every $t\in\R$ has a unique representation $t=x+nh$ with
$x\in[0,h)$ and $n\in\Z$.  Therefore, for every integrable function $g$,
decomposition into $h\Z$-fibers gives
\[
   \int_E g(t)\,dt
   =\int_0^h\sum_{n\in A_x}g(x+nh)\,dx.
\]
Applying this identity with $g(t)=e^{2\pi i(\delta+q/h)t}$ and using
$e^{2\pi iqn}=1$, we obtain
\[
 0=\int_0^h e^{2\pi iqx/h}e^{2\pi i\delta x}
       S_{\gamma,\gamma'}(x)\,dx
       \qquad(q\in\Z).
\]
Thus all Fourier coefficients of the $L^1([0,h))$ function
$e^{2\pi i\delta x}S_{\gamma,\gamma'}(x)$ vanish.  Fourier uniqueness in
$L^1([0,h))$ therefore gives
$e^{2\pi i\delta x}S_{\gamma,\gamma'}(x)=0$ almost everywhere.  Since
$e^{2\pi i\delta x}\ne0$ for every $x$, it follows that
$S_{\gamma,\gamma'}(x)=0$ almost everywhere.  When
$\gamma=\gamma'$, the multitiling hypothesis gives
$S_{\gamma,\gamma}(x)=|A_x|=k$ almost everywhere.

For each pair $(\gamma,\gamma')\in\Gamma^2$, the preceding identity holds
outside a null set that may depend on the pair.  Since $\Gamma^2$ is finite,
we may choose a single full-measure set $X_0\subset[0,h)$ such that, for every
$x\in X_0$,
\[
   |A_x|=k,
   \qquad
   S_{\gamma,\gamma'}(x)=
   \begin{cases}
      k,&\gamma=\gamma',\\
      0,&\gamma\ne\gamma',
   \end{cases}
   \qquad(\gamma,\gamma'\in\Gamma).
\]
Consequently, for every $x\in X_0$, the matrix
\begin{equation}\label{eq:fiber-matrix}
   \frac1{\sqrt{k}}
   \left(e^{2\pi i h\gamma n}\right)_{\gamma\in\Gamma,\ n\in A_x}
\end{equation}
is unitary after any ordering of the finite set $A_x$.

Suppose first that $k=2$.  Choose the two elements $\gamma_1,\gamma_2$ of $\Gamma$ and set
\[
   \alpha=h(\gamma_1-\gamma_2).
\]
For $x\in X_0$, write $A_x=\{m_x,n_x\}$.  Orthogonality of the
corresponding rows in \eqref{eq:fiber-matrix} gives
\[
   e^{2\pi i\alpha(m_x-n_x)}=-1.
\]
Fix $x_0\in X_0$.  Since $m_{x_0}-n_{x_0}\ne0$, the relation
$\alpha(m_{x_0}-n_{x_0})\in\frac12+\Z$ shows that $\alpha$ is rational.
Write $\alpha=p/Q$ in lowest terms, with $Q>0$.  The relation
$(p/Q)(m_{x_0}-n_{x_0})\in\frac12+\Z$ shows that $Q$ is even;
consequently $p$ is odd and has a multiplicative inverse $p^{-1}$ modulo
$Q$.  For every $x\in X_0$, the identity above is equivalent to
\[
   p(m_x-n_x)\equiv Q/2\pmod Q.
\]
Since $p^{-1}$ is odd, $p^{-1}Q/2\equiv Q/2\pmod Q$, and hence
\[
   m_x-n_x\equiv Q/2\pmod Q
   \qquad(x\in X_0).
\]
Thus $A_x$ modulo $Q$ is a coset of
\[
   H_2=\{0,Q/2\}\subset\Z/Q\Z.
\]
Its two elements have distinct residues modulo $Q$, since their difference
is congruent to $Q/2$.  Hence each residue in this coset is represented
exactly once by $A_x$.
The set
\[
   B_2=\{0,1,\ldots,Q/2-1\}+Q\Z
\]
is a common tiling complement for all the fibers:
$A_x\oplus B_2=\Z$ for every $x\in X_0$.  Here $A\oplus B=\Z$ means that
every integer has a unique representation $a+b$ with $a\in A$ and $b\in B$.

Now suppose that $k=3$.  Choose two distinct elements $\gamma_1,\gamma_2\in\Gamma$ and again put $\alpha=h(\gamma_1-\gamma_2)$.  Orthogonality of the corresponding rows in \eqref{eq:fiber-matrix} gives
\[
   \sum_{n\in A_x}e^{2\pi i\alpha n}=0.
\]
Three unit complex numbers whose sum is zero form a rotated regular triangle.
Indeed, after a common rotation write them as $1,u,v$.  Then $u+v=-1$
and $|u|=|v|=1$, so $|u+v|^2=1$ gives
$\operatorname{Re}(u\overline v)=-1/2$; their arguments are therefore
separated by $2\pi/3$.
Therefore, for every $x\in X_0$,
\[
   \{e^{2\pi i\alpha n}:n\in A_x\}
\]
is a coset of $\{1,\omega,\omega^2\}$, where $\omega=e^{2\pi i/3}$.
Fix $x_0\in X_0$.  Among the three elements of $A_{x_0}$, choose distinct
$n_1,n_2$ such that
\[
   e^{2\pi i\alpha(n_1-n_2)}\in\{\omega,\omega^2\}.
\]
Setting $d=n_1-n_2\ne0$, we obtain
\[
 \alpha d\in\Z\mathbin{\pm}\tfrac13.
\]
Writing $\alpha=p/Q$ in lowest terms, the displayed relation implies
$3\mid Q$.  Now fix $x\in X_0$ and $n_0\in A_x$.  Dividing the three
phases associated with $A_x$ by $e^{2\pi i\alpha n_0}$ gives
\[
   \{e^{2\pi i\alpha(n-n_0)}:n\in A_x\}
   =\{1,\omega,\omega^2\}.
\]
Since $\alpha=p/Q$, this is equivalent to
\[
 p(A_x-n_0)\equiv\{0,Q/3,2Q/3\}\pmod Q.
\]
Multiplication by $p$ is an automorphism of $\Z/Q\Z$ and preserves its unique
subgroup of order three, namely $\{0,Q/3,2Q/3\}$.  Multiplying the preceding
congruence by $p^{-1}$ therefore gives
\[
   A_x-n_0\equiv\{0,Q/3,2Q/3\}\pmod Q.
\]
Thus $A_x$ modulo $Q$ is the coset
$n_0+\{0,Q/3,2Q/3\}$ in $\Z/Q\Z$.
The three residues are distinct and $|A_x|=3$, so each is represented
exactly once by an element of $A_x$.
The common complement is
\[
   B_3=\{0,1,\ldots,Q/3-1\}+Q\Z.
\]
Thus $A_x\oplus B_3=\Z$ for every $x\in X_0$.

Let us summarize the two cases.  For $k\in\{2,3\}$, the set $B=B_k$
constructed above is a common tiling complement for the fibers, so that
$A_x\oplus B=\Z$ for every $x\in X_0$.  Let $t\in\R$ have the unique
representation
\[
   t=x+hn,
   \qquad x\in[0,h),\quad n\in\Z.
\]
If $x\in X_0$, then, for each $b\in B$,
\[
   t-hb=x+h(n-b)\in E
   \quad\Longleftrightarrow\quad
   n-b\in A_x.
\]
Consequently,
\[
 \sum_{b\in B}\1_E(t-hb)
 =\#\{(a,b)\in A_x\times B:a+b=n\}=1.
\]
Since $X_0$ has full measure in $[0,h)$, it follows that
\[
   \sum_{b\in B}\1_E(t-hb)=1
   \qquad\text{for almost every }t\in\R,
\]
so $E$ tiles $\R$ by the translation set $hB$.
\end{proof}

\section{The two-component case as a warm-up}\label{sec:fewer}

Before completing the three-component case, we illustrate the preceding
method for two components.  The resulting secular matrix has size two, so
the same ideas require much less case analysis.  The one-component case is
immediate and is recorded briefly below.

The equivalence between spectrality and translational tiling for a union of
two intervals was proved by \L aba \cite{laba-two}.  A short proof of the
spectral-to-tiling direction was subsequently given by Bose and Madan
\cite{bose-madan}, and Kolountzakis discusses the result and a more recent
weak-tiling approach in \cite[Section~3.7]{kolountzakis-survey}.  We include a
different proof of the spectral-to-tiling direction.  It uses the boundary
unitary, the secular determinant, the density uniqueness principle, and
Lemma~\ref{lem:fiber}, and thus serves as a warm-up for the three-component
argument.

Throughout this section, $\Omega$ is assumed to have spectrum $\Lambda$.

If $\Omega=[a,b)$ is one interval of length $L=b-a$, the one-dimensional
version of the boundary-operator construction gives a scalar unitary
$\eta\in\C$, with $|\eta|=1$, and the boundary condition
$f(b)=\eta f(a)$.  Applying this condition to the exponential eigenfunction
$e_\lambda$ gives
\[
   e^{2\pi i\lambda b}=\eta e^{2\pi i\lambda a},
\]
or, equivalently,
\[
   e^{2\pi i\lambda L}=\eta.
\]
Choose $\theta\in\R$ such that $\eta=e^{2\pi i\theta}$.  The solutions of
the displayed equation are exactly
\[
   \lambda=\frac{\theta+n}{L},
   \qquad n\in\Z.
\]
Since the zeros of the secular equation are precisely the spectral points,
\[
   \Lambda=\frac{\theta}{L}+L^{-1}\Z.
\]
Thus the spectrum is one coset of the lattice $L^{-1}\Z$.  The interval
tiles by $L\Z$.

Suppose that $\Omega=[a_1,b_1)\cup[a_2,b_2)$ has two components, where
$\ell_j=b_j-a_j>0$ for $j=1,2$, and put $L=\ell_1+\ell_2$.  For
$\zeta\in\C$, set
\[
   z_j(\zeta)=e^{2\pi i\zeta\ell_j},
   \qquad j=1,2.
\]
The same operator construction gives a boundary unitary
$U=(u_{jk})\in U(2)$ and
\[
   F(\zeta)=\det\bigl(\diag(z_1(\zeta),z_2(\zeta))-U\bigr),
   \qquad \zeta\in\C.
\]
The arguments used to prove Lemma~\ref{lem:boundary-unitary} and
Proposition~\ref{prop:secular}\textup{(i), (ii), and (iv)} do not depend on
there being exactly three components.  Applied in dimension two, they show
that the zeros of $F$ are precisely $\Lambda$, are all real and simple, and
satisfy
\[
 \#(\Lambda\cap[-R,R])=2LR+O(1).
\]
Indeed, expanding the $2\times2$ determinant gives
\[
\begin{aligned}
 F(\zeta)
 &=z_1(\zeta)z_2(\zeta)-u_{22}z_1(\zeta)
   -u_{11}z_2(\zeta)+\det U\\
 &=e^{2\pi iL\zeta}-u_{22}e^{2\pi i\ell_1\zeta}
   -u_{11}e^{2\pi i\ell_2\zeta}+\det U.
\end{aligned}
\]
Thus the term of frequency $0$ has coefficient $\det U\ne0$, because $U$
is unitary, while the term of largest frequency
$L=\ell_1+\ell_2$ has coefficient $1$.  Hence the frequency width of $F$ is
$L$.  The same left- and
right-kernel calculation as in Proposition~\ref{prop:cofactor} gives, at
every spectral point, the identity between the two diagonal cofactors
\[
   z_1(\lambda)\bigl(z_2(\lambda)-u_{22}\bigr)
   =z_2(\lambda)\bigl(z_1(\lambda)-u_{11}\bigr),
   \qquad \lambda\in\Lambda.
\]
Thus
\[
   u_{11}z_2(\lambda)-u_{22}z_1(\lambda)=0,
   \qquad \lambda\in\Lambda.
\]
To extend this equality from the spectral points to every parameter, define
the exponential polynomial
\[
   H(\zeta)=u_{11}e^{2\pi i\ell_2\zeta}
            -u_{22}e^{2\pi i\ell_1\zeta},
   \qquad \zeta\in\C.
\]
It vanishes on $\Lambda$, and its only possible frequencies are $\ell_1$
and $\ell_2$; hence its frequency width is at most
$|\ell_1-\ell_2|<L$.  Since
$\#(\Lambda\cap[-R,R])=2LR+O(1)$, Corollary~\ref{cor:density-unique}
implies that $H$ vanishes identically.  This is the same uniqueness argument
used in the proof of Proposition~\ref{prop:cofactor}, now applied to the
$2\times2$ cofactor identity.  Consequently,
\[
   u_{11}z_2(\zeta)-u_{22}z_1(\zeta)=0
   \qquad\text{for every }\zeta\in\C.
\]

The two possible relations between the component lengths now lead to the
same two tiling mechanisms that will occur below: unequal lengths force a
monomial boundary unitary, whereas equal lengths produce a two-fold lattice
multitiling.

If $\ell_1\ne\ell_2$, linear independence of the two exponentials gives
$u_{11}=u_{22}=0$.  Unitarity then gives
\[
 U=\begin{pmatrix}0&\eta_1\\ \eta_2&0\end{pmatrix},
 \qquad |\eta_1|=|\eta_2|=1,
\]
and hence
\[
 F(\lambda)=z_1(\lambda)z_2(\lambda)-\eta_1\eta_2
 =e^{2\pi iL\lambda}-\eta_1\eta_2.
\]
Since $|\eta_1\eta_2|=1$, choose $\lambda_0\in\R$ such that
$e^{2\pi iL\lambda_0}=\eta_1\eta_2$.  The zero-set characterization of the
spectrum then gives
$\Lambda=\lambda_0+L^{-1}\Z$.  Comparing the two boundary relations
requires no implicit change of convention: for every $\lambda\in\Lambda$,
the condition $f(\mathbf b)=Uf(\mathbf a)$ applied to $e_\lambda$ reads
\[
 e^{2\pi i\lambda b_1}=\eta_1e^{2\pi i\lambda a_2},
 \qquad
 e^{2\pi i\lambda b_2}=\eta_2e^{2\pi i\lambda a_1}.
\]
Since $\lambda+L^{-1}\in\Lambda$, dividing these identities at
$\lambda+L^{-1}$ by the corresponding identities at $\lambda$ gives
\[
 b_1-a_2\in L\Z,
 \qquad b_2-a_1\in L\Z.
\]
The left-endpoint multiset $\{a_1,a_2\}$ and the right-endpoint multiset
$\{b_1,b_2\}$ therefore agree modulo $L\Z$.  Hence the $L$-periodization
\[
   N(x)=\sum_{n\in\Z}\1_\Omega(x+nL),
   \qquad x\in\R,
\]
is constant almost everywhere on $\R/L\Z$: its distributional derivative is
the sum of the Dirac masses at the left endpoints minus those at the right
endpoints, which cancel modulo $L\Z$.  Its constant value is
\[
   \frac1L\int_0^L N(x)\,dx=\frac{|\Omega|}{L}=1.
\]
Thus $\Omega$ tiles by the translation set $L\Z$.

If $\ell_1=\ell_2=h$, set $z(\zeta)=e^{2\pi ih\zeta}$ for $\zeta\in\C$.
Then $F(\zeta)=\det(z(\zeta)I-U)$.  Denote the two eigenvalues of $U$ by
$\rho_1$ and $\rho_2$.  They lie on the unit circle and are distinct, since
a repeated eigenvalue would give multiple zeros of $F$.  For each
$j\in\{1,2\}$, choose $\gamma_j\in\R$ such that
\[
   e^{2\pi ih\gamma_j}=\rho_j.
\]
The solutions of $z(\lambda)=\rho_j$ are then precisely
$\gamma_j+h^{-1}\Z$.  Since the zeros of $F$ are exactly the spectral
points, it follows that
\[
   \Lambda=\Gamma+h^{-1}\Z,
   \qquad |\Gamma|=2.
\]
Here $\Gamma=\{\gamma_1,\gamma_2\}$, and its two elements are distinct modulo
$h^{-1}\Z$ because $\rho_1\ne\rho_2$.
Each of the two intervals has length $h$ and therefore periodizes to one
under $h\Z$.  Consequently, $\Omega$ is a two-fold $h\Z$-multitile.
Lemma~\ref{lem:fiber} with $k=2$ now gives a translational tiling.

We now turn to the three-component case, where Proposition~\ref{prop:classification}
provides the additional algebraic input.

\section{Completion for three components}\label{sec:completion}

We now return to $\Omega$ and combine Proposition~\ref{prop:classification}
with the fiber result in Lemma~\ref{lem:fiber}.  The three alternatives in the classification are
handled separately: monomial boundary unitaries give a lattice tiling
directly, while the two non-monomial alternatives give multitilings to which
Lemma~\ref{lem:fiber} applies.

\subsection{Monomial boundary unitaries}

Monomial boundary matrices are closely related to weighted permutation
actions and groups of local translations; see
\cite{dutkay-jorgensen-momentum,ducasse-dutkay-fernandez}.  We first show
that a monomial boundary unitary forces $\Omega$, up to a null set, to be a
fundamental domain for the lattice $L\Z$.

\begin{proposition}\label{prop:monomial-tiles}
If the boundary unitary $U$ is monomial, then $\Omega$ tiles by the lattice
$L\Z$.  Equivalently, up to a null set, $\Omega$ is a measurable fundamental
domain for $L\Z$ in $\R$.
\end{proposition}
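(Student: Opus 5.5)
Write $U=P\,\diag(\eta_1,\eta_2,\eta_3)$ in the form $u_{j\sigma(j)}=\eta_j$, $|\eta_j|=1$, with $\sigma$ a permutation of $\{1,2,3\}$ and all other entries zero. The plan has three steps: compute the secular determinant, show the spectrum is invariant under translation by $L^{-1}$, and then read off endpoint congruences modulo $L$ as in the two-component warm-up.

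\emph{Step 1 (secular determinant).} I first compute $F(\zeta)=\det(D_\ell(\zeta)-U)$ from the cycle structure of $\sigma$. If $\sigma$ is a $3$-cycle, the only surviving terms are the full diagonal product and the product of the $\eta$'s, giving $F(\zeta)=e^{2\pi iL\zeta}-\eta_1\eta_2\eta_3$, up to the sign of the cycle. In that case $\Lambda=\lambda_0+L^{-1}\Z$ directly by Proposition~\ref{prop:secular}\textup{(i)}.

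If $\sigma$ has fixed points, then $F$ factors according to the cycles. A fixed point $j$ gives a factor $z_j-\eta_j$, and a transposition $(j\,k)$ gives $z_jz_k-\eta_j\eta_k$. Each factor has real zeros forming a lattice coset: $\ell_j^{-1}\Z$, respectively $(\ell_j+\ell_k)^{-1}\Z$. These cosets must be pairwise disjoint by simplicity in Proposition~\ref{prop:secular}\textup{(ii)}.

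\emph{Step 2 (translation invariance of $\Lambda$).} I need $\Lambda+L^{-1}\subseteq\Lambda$, or some substitute that still yields the endpoint congruences. In the non-$3$-cycle cases, $\Lambda$ is a disjoint union of cosets of different lattices whose densities sum to $L$. This situation is not obviously ruled out by density alone, and it is the main obstacle. I would first try the following. The eigenfunction $e_\lambda$ for $\lambda$ in, say, the fixed-point coset of $j$ must be orthogonal to the exponentials from the other cosets. The boundary relation \eqref{eq:global-boundary} applied to all of $\Lambda$ forces $e^{2\pi i\lambda b_j}=\eta_j e^{2\pi i\lambda a_j}$ for every $\lambda\in\Lambda$, not just those in one coset. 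Hence every $\lambda\in\Lambda$ satisfies $z_j(\lambda)=\eta_j$, so all of $\Lambda$ lies in the single coset $\gamma_j+\ell_j^{-1}\Z$. The density of that coset is $\ell_j<L$, which contradicts \eqref{eq:density}. The same argument disposes of a transposition whose complement is nonempty, using the density $\ell_j+\ell_k<L$. Hence $\sigma$ must be a $3$-cycle, and I expect the remaining work here to be routine.

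\emph{Step 3 (endpoint congruences and tiling).} With $\sigma$ a $3$-cycle and $\Lambda=\lambda_0+L^{-1}\Z$, I apply \eqref{eq:global-boundary} at $\lambda$ and at $\lambda+L^{-1}$ and divide. Coordinate $j$ gives $e^{2\pi i(b_j-a_{\sigma(j)})/L}=1$, that is, $b_j\equiv a_{\sigma(j)}\pmod{L\Z}$. So the multisets of left and right endpoints agree modulo $L$. The periodization $N(x)=\sum_n\1_\Omega(x+nL)$ then has zero distributional derivative on $\R/L\Z$. It is therefore constant, and its average gives the value $|\Omega|/L=1$. Hence $\Omega$ tiles by $L\Z$, which is equivalent to being a fundamental domain for $L\Z$ up to a null set.
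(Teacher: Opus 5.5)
Your proof is correct, and its outline matches the paper's: force $\sigma$ to be a $3$-cycle, identify $\Lambda=\lambda_0+L^{-1}\Z$, compare the boundary equations at $\lambda$ and $\lambda+L^{-1}$, and periodize. The genuine difference is how you exclude a proper cycle.

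The paper solves the eigenvalue equations $z_i(\lambda)y_i=\eta_i y_{\pi(i)}$ at arbitrary real $\lambda$. A proper cycle $C$ would yield, for each real solution of $e^{2\pi i\lambda L_C}=\eta_C$, an eigenfunction of $A$ supported only on the components indexed by $C$. This is impossible, because each eigenspace of $A$ is spanned by some $e_\lambda$, and $e_\lambda$ has constant nonzero modulus on all of $\Omega$.

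You instead read the row of a fixed point $j$ in \eqref{eq:global-boundary} on the actual spectrum. This gives $e^{2\pi i\lambda\ell_j}=\eta_j$ for all $\lambda\in\Lambda$, so $\Lambda$ lies in one coset of $\ell_j^{-1}\Z$. That coset has at most $2\ell_jR+O(1)$ points in $[-R,R]$, contradicting \eqref{eq:density} because $\ell_j<L$. Every non-$3$-cycle in $S_3$ has a fixed point, so this alone finishes your Step~2. The transposition variant is redundant, and your orthogonality remark is never used.

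Your route is shorter and builds no auxiliary eigenfunctions. It does draw on the zero-counting input behind Proposition~\ref{prop:secular}\textup{(iv)}, whereas the paper needs only that the eigenspaces of $A$ are one-dimensional. You could avoid density by using simplicity instead. Write $F=(z_j-\eta_j)G$, where $G$ is the determinant of the complementary block. The real zeros of $G$ lie in $\Lambda$, so they also satisfy $z_j=\eta_j$ and are therefore multiple zeros of $F$. Both versions extend to $n$ components by multiplying the boundary equations around a proper cycle.

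In the $3$-cycle case there is no sign ambiguity. A $3$-cycle is even, so $F(\zeta)=e^{2\pi iL\zeta}-\eta_1\eta_2\eta_3$ exactly. Reading $\Lambda$ off Proposition~\ref{prop:secular}\textup{(i)} is then a clean substitute for the paper's hand construction of eigenfunctions.
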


\begin{proof}
Since $U$ is monomial, for each $i\in\{1,2,3\}$ there is a unique column
$\pi(i)\in\{1,2,3\}$ in which the $i$th row of $U$ is nonzero.  Each column
also contains exactly one nonzero entry, so the map
$\pi:\{1,2,3\}\to\{1,2,3\}$ is a permutation.  Thus
\[
   U_{i,\pi(i)}=\eta_i,
   \qquad |\eta_i|=1,
   \qquad U_{ij}=0\quad\text{if }j\ne\pi(i).
\]
For $\lambda\in\R$, write
$y=(y_1,y_2,y_3)^{\mathsf T}$ for a possible left-endpoint vector.  By
\eqref{eq:F}, the eigenvalue equation $M(\lambda)y=0$, equivalently
$D_\ell(\lambda)y=Uy$, has the componentwise form
\[
 z_i(\lambda)y_i=\eta_i y_{\pi(i)}.
\]
Let $C=(i_1\,i_2\,\ldots\,i_r)$ be one of the cycles in the disjoint-cycle
decomposition of $\pi$; thus $\pi(i_s)=i_{s+1}$ for $1\le s<r$ and
$\pi(i_r)=i_1$.  Put
$L_C=\sum_{i\in C}\ell_i$ and $\eta_C=\prod_{i\in C}\eta_i$.
Because $\pi$ maps $C$ onto itself, the equations involving the coordinates
indexed by $C$ are
\[
   z_{i_s}(\lambda)y_{i_s}=\eta_{i_s}y_{i_{s+1}}
   \quad(1\le s<r),
   \qquad
   z_{i_r}(\lambda)y_{i_r}=\eta_{i_r}y_{i_1},
\]
and they do not involve any coordinate outside $C$.  Starting with a nonzero
value of $y_{i_1}$, the first $r-1$ equations determine
$y_{i_2},\ldots,y_{i_r}$ successively.  The last equation is then satisfied
if and only if
\[
   e^{2\pi i\lambda L_C}=\eta_C,
\]
where $|\eta_C|=1$.  This equation has infinitely many real solutions.  For
each such $\lambda$, choose a nonzero solution $y$ of the preceding cycle
equations and set $y_i=0$ for $i\notin C$.  Define
\[
   f(x)=y_i e^{2\pi i\lambda(x-a_i)}
   \qquad (x\in[a_i,b_i],\ 1\le i\le3).
\]
Then $f(\mathbf b)=Uf(\mathbf a)$, so $f\in\Dom(A)$ by
Lemma~\ref{lem:boundary-unitary}, and $Af=Df=\lambda f$.  Moreover, $f$
vanishes on every interval whose index is not in $C$.  If $C$ were a proper
cycle, this would give a nonzero eigenfunction vanishing on at least one
component.  This is impossible by the definition of $A$.  Indeed, if
$Ag=\lambda g$ and
\[
   g=\sum_{\nu\in\Lambda}c_\nu e_\nu,
\]
then comparison of the spectral coefficients gives
$(\nu-\lambda)c_\nu=0$ for every $\nu\in\Lambda$.  Hence
$\lambda\in\Lambda$ and $g\in\operatorname{span}\{e_\lambda\}$.  But
$|e_\lambda(x)|=1$ on every component of $\Omega$, so no nonzero multiple of
$e_\lambda$ can vanish on a component.  Thus $\pi$ is a single three-cycle.

Multiplying the three boundary equations around the cycle gives
\begin{equation}\label{eq:monomial-cycle-spectrum}
   e^{2\pi i\lambda L}=\eta_1\eta_2\eta_3.
\end{equation}
Conversely, suppose that $\lambda\in\R$ satisfies
\eqref{eq:monomial-cycle-spectrum}.  Choose
$y_{i_1}\ne0$ and define successively
\[
   y_{i_{s+1}}
   =\frac{z_{i_s}(\lambda)}{\eta_{i_s}}y_{i_s}
   \qquad(1\le s<3).
\]
Equation~\eqref{eq:monomial-cycle-spectrum} is exactly the condition that the
remaining boundary equation
$z_{i_3}(\lambda)y_{i_3}=\eta_{i_3}y_{i_1}$ also holds.  Hence
$D_\ell(\lambda)y=Uy$, and the function
$f(x)=y_i e^{2\pi i\lambda(x-a_i)}$ belongs to $\Dom(A)$ and satisfies
$Af=\lambda f$.  Thus every real solution of
\eqref{eq:monomial-cycle-spectrum} is an eigenvalue of $A$, and therefore
\[
   \Lambda=\lambda_0+L^{-1}\Z
\]
for some $\lambda_0$.  For each $\lambda\in\Lambda$, apply the boundary
condition $e_\lambda(\mathbf b)=Ue_\lambda(\mathbf a)$ to the eigenfunction
$e_\lambda$.  Its $i$th component is
\begin{equation}\label{eq:monomial-component-boundary}
   e^{2\pi i\lambda b_i}
   =\eta_i e^{2\pi i\lambda a_{\pi(i)}}.
\end{equation}
The identity $\Lambda=\lambda_0+L^{-1}\Z$ implies that
$\lambda+L^{-1}\in\Lambda$ whenever $\lambda\in\Lambda$.  Apply
\eqref{eq:monomial-component-boundary} at $\lambda+L^{-1}$ to obtain
\[
   e^{2\pi i(\lambda+L^{-1})b_i}
   =\eta_i e^{2\pi i(\lambda+L^{-1})a_{\pi(i)}}.
\]
Dividing this equality by \eqref{eq:monomial-component-boundary} gives
\[
   e^{2\pi i b_i/L}=e^{2\pi i a_{\pi(i)}/L},
\]
and hence
\[
   b_i-a_{\pi(i)}\in L\Z
   \qquad (i=1,2,3).
\]
Thus $b_i\equiv a_{\pi(i)}\pmod L$ for each $i$.  Since $\pi$ is a
permutation, this means that the endpoint multisets satisfy
\[
   \{b_1\bmod L,b_2\bmod L,b_3\bmod L\}
   =\{a_1\bmod L,a_2\bmod L,a_3\bmod L\}.
\]

Consider the $L$-periodization
\[
   N(x)=\sum_{n\in\Z}\1_\Omega(x+nL).
\]
On the circle $\R/L\Z$, its distributional derivative is
\[
 N'=\sum_{j=1}^3
 \bigl(\delta_{a_j\bmod L}-\delta_{b_j\bmod L}\bigr)=0.
\]
A distribution on the circle with zero derivative is constant; since
$N\in L^1(\R/L\Z)$, the constant is represented almost everywhere.  Its
average is
\[
   \frac1L\int_0^L N(x)\,dx=\frac{|\Omega|}{L}=1.
\]
Therefore $N=1$ almost everywhere, and $\Omega$ tiles by the translation set
$L\Z$.
\end{proof}

\subsection{Three equal lengths}

We next treat the first non-monomial alternative in
Proposition~\ref{prop:classification}.  Equal lengths make the secular
matrix a scalar multiple of the identity minus $U$, so its zeros are
determined directly by the eigenvalues of $U$.

Assume
\[
   \ell_1=\ell_2=\ell_3=h.
\]
Then $D_\ell(\lambda)=z(\lambda)I$, where $z(\lambda)=e^{2\pi ih\lambda}$, and
\[
   F(\lambda)=\det(z(\lambda)I-U).
\]
The eigenvalues of the boundary unitary $U$ lie on the unit circle.  If an
eigenvalue $\rho$ of $U$ had algebraic multiplicity greater than one, every
solution of $z(\lambda)=\rho$ would be a multiple zero of $F$, since
$z'(\lambda)=2\pi ihz(\lambda)\ne0$.
Proposition~\ref{prop:secular}\textup{(ii)} excludes this.  Thus the three
eigenvalues of $U$ are distinct; denote them by $\rho_1,\rho_2,\rho_3$.
For each $j\in\{1,2,3\}$, choose the unique
$\gamma_j\in[0,h^{-1})$ such that
\[
   e^{2\pi ih\gamma_j}=\rho_j.
\]
Since the real solutions of $e^{2\pi ih\lambda}=\rho_j$ are precisely
$\gamma_j+h^{-1}\Z$, we obtain
\[
   \Lambda=\Gamma+h^{-1}\Z,
   \qquad
   \Gamma=\{\gamma_1,\gamma_2,\gamma_3\},
\]
where the elements of $\Gamma$ are distinct modulo $h^{-1}\Z$.
Each interval of length $h$ periodizes to one under $h\Z$, so
\[
   \sum_{n\in\Z}\1_\Omega(x+nh)=3
\]
for almost every $x\in\R$.  Applying Lemma~\ref{lem:fiber} with
$E=\Omega$, $k=3$, and $\Gamma=\{\gamma_1,\gamma_2,\gamma_3\}$, we obtain
a set $B\subset\Z$ such that
\[
   \sum_{b\in B}\1_\Omega(x-hb)=1
   \qquad\text{for almost every }x\in\R.
\]
Thus $\Omega$ tiles $\R$ by the translation set $hB$.

\subsection{The additive exceptional case}

It remains to treat the second non-monomial alternative.  Here the additive
relation among the lengths reduces the secular determinant to a quadratic
polynomial in a single exponential variable, so that the spectrum becomes
the union of two cosets of one lattice.

Assume
\[
   \ell_3=\ell_1+\ell_2=:h
\]
and that $U$ is not monomial.  By Proposition~\ref{prop:classification}, after possibly interchanging the first two labels,
\[
 U=\begin{pmatrix}
 0&\alpha&0\\
 c&0&d\\
 e&0&\alpha c
 \end{pmatrix},
 \qquad |\alpha|=1,
 \qquad
 \begin{pmatrix}c&d\\e&\alpha c\end{pmatrix}\in U(2).
\]
Since $z_3=z_1z_2$, direct expansion gives
\begin{equation}\label{eq:additive-F}
   F(\lambda)
   =\bigl(z_3(\lambda)-\alpha c\bigr)^2-\alpha de.
\end{equation}
Put
\[
 q(w)=(w-\alpha c)^2-\alpha de.
\]
Its constant term is $q(0)=-\det U\ne0$, so both roots are nonzero.  If
$q(\rho)=0$, choose $\lambda\in\C$ such that
\[
   e^{2\pi ih\lambda}=\rho,
\]
which is possible because $\rho\ne0$.  Then
$F(\lambda)=q(e^{2\pi ih\lambda})=q(\rho)=0$, so
$\lambda\in\R$ by Proposition~\ref{prop:secular}\textup{(ii)}.  Therefore
$|\rho|=|e^{2\pi ih\lambda}|=1$.  The two roots of $q$ must be distinct.
Indeed, if $q$ had a
repeated root $\rho$, then $q(w)=(w-\rho)^2$.  Choose
$\lambda_0\in\R$ such that $e^{2\pi ih\lambda_0}=\rho$.  Since
\[
   \left.\frac{d}{d\lambda}e^{2\pi ih\lambda}\right|_{\lambda=\lambda_0}
   =2\pi ih\rho\ne0,
\]
the identity $F(\lambda)=q(e^{2\pi ih\lambda})$ would make $\lambda_0$ a
zero of $F$ of multiplicity two, contradicting the simplicity of the zeros
of $F$ in Proposition~\ref{prop:secular}\textup{(ii)}.  Denote the two
distinct roots of $q$ by $\rho_1$ and $\rho_2$.  For each $j\in\{1,2\}$,
choose the unique $\gamma_j\in[0,h^{-1})$ such that
\[
   e^{2\pi ih\gamma_j}=\rho_j.
\]
The real solutions of $e^{2\pi ih\lambda}=\rho_j$ are precisely
$\gamma_j+h^{-1}\Z$.  Hence
\[
   \Lambda=(\gamma_1+h^{-1}\Z)\,\sqcup\,
           (\gamma_2+h^{-1}\Z).
\]

The first row of the boundary equation \eqref{eq:global-boundary} is
\[
   e^{2\pi i\lambda b_1}
   =\alpha e^{2\pi i\lambda a_2}.
\]
Since $\Lambda$ is the disjoint union of two cosets of $h^{-1}\Z$, it is
invariant under addition by $h^{-1}$.  Thus, for every $\lambda\in\Lambda$,
the first-row boundary equation at $\lambda+h^{-1}$ is
\[
   e^{2\pi i(\lambda+h^{-1})b_1}
   =\alpha e^{2\pi i(\lambda+h^{-1})a_2}.
\]
Dividing this equality by the first-row equation at $\lambda$ gives
\[
   e^{2\pi i b_1/h}=e^{2\pi i a_2/h},
\]
and hence
\[
   b_1-a_2\in h\Z.
\]
Modulo $h\Z$,
\[
 a_2\equiv b_1\equiv a_1+\ell_1,
\]
so the left endpoint $a_2$ of the second interval coincides modulo $h\Z$
with the right endpoint $b_1$ of the first interval.  Moreover,
\[
   b_2\equiv a_2+\ell_2
        \equiv a_1+\ell_1+\ell_2
        \equiv a_1\pmod h.
\]
Thus $[a_1,b_1)$ and $[a_2,b_2)$ project to adjacent half-open arcs of total
length $h$ on $\R/h\Z$, and their indicator functions sum to one almost
everywhere on the circle; equivalently,
\[
   \sum_{n\in\Z}
   \left(
      \1_{[a_1,b_1)}(x+nh)+\1_{[a_2,b_2)}(x+nh)
   \right)=1
   \qquad\text{for almost every }x\in\R.
\]
Since $b_3-a_3=h$, the third interval also satisfies
\[
   \sum_{n\in\Z}\1_{[a_3,b_3)}(x+nh)=1
   \qquad\text{for almost every }x\in\R.
\]
Adding the last two periodization identities gives
\[
   \sum_{n\in\Z}\1_\Omega(x+nh)=2
\]
for almost every $x\in\R$.  Applying Lemma~\ref{lem:fiber} with
$E=\Omega$, $k=2$, and
$\Gamma=\{\gamma_1,\gamma_2\}$, we obtain a set $B\subset\Z$ such that
\[
   \sum_{b\in B}\1_\Omega(x-hb)=1
   \qquad\text{for almost every }x\in\R.
\]
Thus $\Omega$ tiles $\R$ by the translation set $hB$.

Proposition~\ref{prop:classification} shows that these three mechanisms
exhaust all possibilities.  Together with the lower-component cases recorded
in Section~\ref{sec:fewer}, this completes the proof of
Theorem~\ref{thm:main}; the three-component conclusions also give the
geometric alternatives in Theorem~\ref{thm:geometric-trichotomy}.

\section{Extensions and limitations for \texorpdfstring{$n$}{n} intervals}
\label{sec:n-intervals}

We conclude by separating the parts of the argument that apply to an
arbitrary finite union of intervals from those that rely essentially on the
number of components being at most three.  Let
\[
   E=\bigcup_{j=1}^n[a_j,b_j),
   \qquad \ell_j=b_j-a_j>0,
   \qquad L=\sum_{j=1}^n\ell_j,
\]
and suppose that $E$ has a spectrum $\Lambda$.

\paragraph{The boundary-operator framework.}
The construction in Section~\ref{sec:operator} is unchanged.  The derivative
defined diagonally in the exponential basis is a self-adjoint extension of
the minimal derivative, and there is a unique matrix $U\in U(n)$ such that
\[
   \Dom(A)=\left\{f\in\bigoplus_{j=1}^nH^1([a_j,b_j]):
   f(\mathbf b)=Uf(\mathbf a)\right\}.
\]
With
\[
   D_\ell(\lambda)
   =\diag\bigl(e^{2\pi i\lambda\ell_1},\ldots,
                    e^{2\pi i\lambda\ell_n}\bigr),
\]
the secular determinant
\[
   F(\lambda)=\det(D_\ell(\lambda)-U)
\]
still satisfies
\[
   F(\lambda)=0\quad\Longleftrightarrow\quad\lambda\in\Lambda
   \qquad(\lambda\in\C).
\]
Its zeros are real and simple, and
$\operatorname{rank}(D_\ell(\lambda)-U)=n-1$ at every
$\lambda\in\Lambda$.  Indeed, self-adjointness and the one-dimensional
eigenspaces of the diagonal operator $A$ give reality and rank $n-1$.  If
$y=(e^{2\pi i\lambda a_j})_{j=1}^n$, the transversality computation in
\eqref{eq:transversal} becomes
\[
   (D_\ell(\lambda)y)^*M'(\lambda)y
   =2\pi i\sum_{j=1}^n\ell_j=2\pi iL\ne0,
\]
and Jacobi's cofactor formula gives $F'(\lambda)\ne0$.  Thus the simplicity
argument also extends without change in substance.

\paragraph{Zero counting and cofactor identities.}
Expanding $F$ gives an exponential polynomial whose frequencies are subset
sums of $\ell_1,\ldots,\ell_n$.  Its extreme frequencies are $0$ and $L$.
Consequently, the zero-counting argument and the frequency-width uniqueness
principle of Section~\ref{sec:zero-count} remain available for every finite
$n$.  At a spectral point, the secular matrix has rank $n-1$, so its adjugate
has rank one.  Thus the adjugate and cofactor method of
Section~\ref{sec:cofactor} also has an $n$-dimensional formulation.  What does
not extend automatically is the subsequent classification: for larger $n$,
the resulting identities involve many principal minors and many possible
relations among subset sums of the component lengths.

\paragraph{Monomial boundary unitaries.}
The proof of Proposition~\ref{prop:monomial-tiles} extends to every finite
$n$.  If $U\in U(n)$ is
monomial, its underlying permutation cannot contain a proper cycle, since
such a cycle would produce a nonzero eigenfunction supported on only the
corresponding components.  Hence the permutation is a single $n$-cycle.
Multiplication of the boundary equations around that cycle gives
\[
   \Lambda=\lambda_0+L^{-1}\Z.
\]
Comparing the boundary equations at $\lambda$ and $\lambda+L^{-1}$ shows
that the multisets of left and right endpoints agree modulo $L\Z$.
Periodization then gives
\[
   \sum_{m\in\Z}\1_E(x+mL)=1
   \qquad\text{for almost every }x\in\R.
\]
Thus, for every finite $n$, a monomial boundary unitary forces $E$ to be a
fundamental domain for $L\Z$.

\paragraph{Equal component lengths.}
If $\ell_1=\cdots=\ell_n=h$, then
\[
   F(\lambda)=\det(e^{2\pi ih\lambda}I-U).
\]
The simplicity of the zeros of $F$ implies that the $n$ eigenvalues of $U$
are distinct.  Therefore $\Lambda$ is a union of $n$ distinct cosets of
$h^{-1}\Z$, while the geometry gives
\[
   \sum_{m\in\Z}\1_E(x+mh)=n
   \qquad\text{for almost every }x\in\R.
\]
Hence $E$ is an $n$-fold lattice multitile by $h\Z$.  For $n=2,3$,
Lemma~\ref{lem:fiber} converts this information into an ordinary
translational tiling.  No such conversion is established here for
$n\ge4$.

\paragraph{Potential extensions to $n$ intervals.}
The exhaustive classification in Proposition~\ref{prop:classification} is
specific to $U(3)$ and to the subset-sum relations among three positive
lengths.  Likewise, the final fiber argument uses the rigidity of zero sums
of two or three unit complex numbers: two such numbers must be antipodal,
and three must form a rotated equilateral triangle.  Starting with four unit
complex numbers, zero-sum configurations have continuous degrees of freedom,
so the proof of Lemma~\ref{lem:fiber} does not extend by the same argument.
In particular, the present method supplies a general operator, determinant,
zero-counting, and rank-one-cofactor framework for finite unions of
intervals, but the algebraic classification and the passage from higher
multiplicity lattice multitilings to ordinary tilings remain the obstacles
for $n\ge4$.

\section*{Acknowledgments}

The author is grateful to Chun-Kit Lai and Mihalis Kolountzakis for many
helpful discussions.  Part of this work was carried out during a visit to the
University of Crete.  The author thanks Mihalis Kolountzakis and Maria
Loukaki for their warm hospitality and the University of Crete for providing
an excellent working environment.  This work was supported by the National
Natural Science Foundation of China (NSFC), grant Nos.~12571198 and 12231013.

\bigskip
\noindent
Ruxi Shi\\
Shanghai Center for Mathematical Sciences, Fudan University,\\
200438 Shanghai, China\\
Email: \href{mailto:ruxishi@fudan.edu.cn}{\texttt{ruxishi@fudan.edu.cn}}

\end{document}